\newtheorem{theorem}{Theorem}[section]
\newtheorem{lemma}[theorem]{Lemma}
\newtheorem{prop}[theorem]{Proposition}
\newtheorem{proposition}[theorem]{Proposition}
\newtheorem{corollary}[theorem]{Corollary}
\theoremstyle{definition}
\newtheorem{convention}[theorem]{Convention}
\newtheorem{example}[theorem]{Example}
\newtheorem{remark}[theorem]{Remark}
\numberwithin{equation}{section}
\def\ggg{\mathfrak{g}}
\def\gl{\mathfrak{gl}}
\def\cb{\mathcal{B}}
\def\Cg{{C\hskip-2pt\ggg}}
\def\CS{{C\hskip-2pt S}}
\def\ggg{\mathfrak{g}}
\def\ttt{\mathfrak{t}}
\def\hhh{\mathfrak{h}}
\def\nnn{\mathfrak{n}}
\def\uuu{\mathfrak{u}}
\def\bbf{\mathbb{F}}
\def\bba{{\mathbb{A}}}
\def\ug{\underline{g}}
\def\uv{\underline{v}}
\def\uGz{\underline{G_0}}
\def\sesi{\text{ss}}
\def\sfd{\textsf{d}}
\def\Lie{\mathsf{Lie}}
\def\Ad{\text{Ad}}
\def\GL{\text{GL}}
\def\Hom{\text{Hom}}
\def\Aut{\text{Aut}}
\def\Der{\text{Der}}
\def\Nor{\text{Nor}}
\def\id{\mathsf{id}}
\newcommand{\ff}{\mathbb{F}} 
\newcommand{\on}[1]{\operatorname{#1}}
\newcommand{\cw}{\mathscr{W}}
\newcommand{\co}{\mathcal{O}}
\newcommand{\cd}{\mathcal{D}} 
\def\GL{\text{\rm GL}}
\def\diag{\text{\rm Diag}}
\def\Nor{\text{\rm Nor}}
\def\Cent{\text{\rm Cent}}
\def\Ad{\mbox{Ad}}
\newcommand{\lieg}{\mathfrak{g}}
\newcommand{\ra}{\rightarrow} 
\newcommand{\inj}{\hookrightarrow} 
\newcommand{\mt}{\mapsto} 
\newcommand{\lieh}{\mathfrak{h}} 
\newcommand{\liel}{\mathfrak{l}} 
\newcommand{\lieb}{\mathfrak{b}} 
\newcommand{\liep}{\mathfrak{p}} 
\newcommand{\liet}{\mathfrak{t}} 
\newcommand{\iso}{\simeq} 
\newcommand{\ds}{\oplus} 
\newcommand{\p}{\partial} 
\newcommand{\she}[1]{\mathcal{#1}} 
\newcommand{\nc}{\she{N}} 
\newcommand{\nilg}[1]{\she{N}(\lieg_{#1})}
\newcommand{\bb}[1]{\mathbb{#1}} 
\newcommand{\pr}[1]{#1^{\prime}}
\begin{document}

\subjclass[2010]{20G05, 20G07, 20G15, 17B10, 17B45}

\keywords{Semi-reductive algebraic groups, Semi-reductive Lie algebras, Chevalley restriction theorem, Nilpotent cone, Steinberg map, Springer resolution}

\thanks{This work is partially supported by the National Natural Science Foundation of China (Grant Nos. 12071136, 11671138 and 11771279), Shanghai Key Laboratory of PMMP (No. 13dz2260400), the Fundamental Research Funds of Yunnan Province (Grant No. 2020J0375) and the Fundamental Research Funds of YNUFE (Grant No. 80059900196).}

\title[Semi-reductive groups]{On Chevalley restriction theorem for semi-reductive algebraic groups and its applications}

\author{Ke Ou, Bin Shu and Yu-Feng Yao}

\address{Department of Statistics and Mathematics, Yunnan University of Finance and Economics, Kunming, 650221, China.}\email{keou@ynufe.edu.cn}
\address{School of Mathematical Sciences, East China Normal University, Shanghai, 200241, China.} \email{bshu@math.ecnu.edu.cn}
\address{Department of Mathematics, Shanghai Maritime University, Shanghai, 201306, China.}\email{yfyao@shmtu.edu.cn}

\begin{abstract} An  algebraic group is called  semi-reductive if it is a semi-direct product of a reductive subgroup and the unipotent radical.  Such a semi-reductive algebraic group  naturally arises and also plays a key role in the study of modular representations of non-classical finite-dimensional simple Lie algebras in positive characteristic, and some other cases. Let $G$ ba a connected semi-reductive algebraic group over an algebraically closed field $\ff$ and $\ggg=\Lie(G)$. It turns out that $G$ has many same properties as reductive groups, such as the Bruhat decomposition.
In this note,  we obtain an analogue of classical Chevalley restriction theorem for $\ggg$, which says that the $G$-invariant ring  $\ff[\ggg]^G$ is a polynomial ring if $\ggg$ satisfies a certain ``posivity" condition suited for lots of cases we are interested in.
As applications, we further investigate the nilpotent cones and  resolutions of singularities for  semi-reductive Lie algebras.
\end{abstract}
\maketitle

\section{Introduction}
Let $ \ff $ be an algebraically closed field. An algebraic group $G$ over $\bbf$ is called semi-reductive if $G=G_0\ltimes U$ with $G_0$ being a reductive subgroup, and $U$ the unipotent radical. Let $\ggg=\Lie(G)$, $\ggg_0=\Lie(G_0)$ and $\uuu=\Lie(U)$, then $\ggg=\ggg_0\oplus\uuu$, which is called a semi-reductive Lie algebra.

Preliminarily, the motivation of initiating the study of semi-reductive algebraic groups and Lie algebras comes from the study of irreducible representations of finite-dimensional non-classical simple Lie algebras in prime characteristic (see Example \ref{ex1}).  In the direction of this new study, there have been some interesting mathematical phenomenons revealing (to see \cite{Ren}, \cite{RenShu}, \cite{SYX}, \cite{Xue}).

 In the present paper, we focus on establishing an analogue of the classical Chevalley restriction theorem in the case of semi-reductive algebraic groups.
 
Recall that for a connected reductive algebraic group $L$ over $\bbf$ with $ \liel=\Lie(L)$, and a maximal torus $T$ of $L$ with $\lieh=\Lie(T)$. We have the Weyl group $ \mathscr{W}=\on{Nor}_{L}(T)/\on{Cent}_{L}(T)$.
The classical Chevalley restriction theorem asserts that the restriction
homomorphism $ \ff[\liel]\ra \ff[\lieh] $ induces an isomorphism of algebras of invariants: $  \ff[\liel]^{{L}}\cong\ff[\lieh]^{\mathscr{W}}$ where  $\ff[ \liel ] $ (resp. $ \ff[\lieh] $) denotes the algebra of polynomial functions on $ \liel $ (resp. $ \lieh $) (see \cite[Proposition 7.12]{Jan} if $ \on{ch}(\ff)>0 $ and \cite[\S 23.1 and Appendix 23]{Hum2} for semisimple $ \liel $ if $ \on{ch}(\ff)=0 $). The Chevalley restriction theorem plays a key role in the study of representations of reductive Lie algebras. It has been generalized in various ways and cases (see \cite{LR}, \cite{Pre} etc.). However, it may fail if $ \liel $ is replaced with an arbitrary algebraic Lie algebra $ \mathfrak{q}. $ In particular, one has to distinguish the adjoint and coadjoint representations of $ \mathfrak{q}. $ See \S\ref{4} for a concrete example.

In this note, we will generalize the Chevalley restriction theorem to the case of a semi-reductive algebraic group $ G $ if $G$ satisfies a certain ``posivity" condition (see Convention \ref{positive} for the precise definition).

The generalization of the Chevalley restriction theorem seems to be quite useful in understanding the structure of semi-reductive Lie algebras. As applications, we investigate the nilpotent cones and resolutions of singularities for semi-reductive Lie algebras. In particular, Propositions \ref{nil cone 3}, \ref{5.3} and \ref{5.9} provide the structure of nilpotent cone $ \nc $, Steinberg maps and resolution of singularities of $ \nc $ respectively.

Our paper is organized as follows. In \S2, we recall some known results, and investigate the structures of semi-reductive algebraic groups as well as semi-reductive Lie algebras. In \S3, the Chevalley restriction theorem for semi-reductive Lie algebras $ \lieg $ are presented provided $ \lieg $ satisfied a certain ``posivity" condition and some restrictions on the characteristic of $ \ff. $ As the first application, we illustrate the difference between the adjoint and coadjoint representations of $ \lieg $ in \S4. As the second application, we study the nilpotent cone of $ \lieg $ and its Springer resolution  in the final section.

\section{Semi-redutive groups and semi-reductive Lie algebeas}
\subsection{Notions and assumptions}
Throughout this paper, all vector spaces and varieties are defined over an algebraically closed field $\bbf$.

$G=G_0\ltimes U$ be a semi-reductive group with $G_0$ being a reductive subgroup, and $U$ the unipotent radical. Let $\ggg=\Lie(G)$, $\ggg_0=\Lie(G_0)$ and $\uuu=\Lie(U)$, then $\ggg=\ggg_0\oplus\uuu$. Let $ \on{pr}: \lieg  \ra  \lieg_0,\ \pi: \lieg  \ra  \uuu $ be the projections. In the following, we give some examples of  semi-reductive Lie algebras (resp. semi-reductive algebraic groups).

\begin{example}\label{ex1} (The non-negative graded part of a restricted Lie algebra of Cartan type over $\bbf$ of prime characteristic $p>0$)
Let $\mathscr{A}(n)=\bbf[T_1,\ldots,T_n]\slash (T_1^p,\ldots,
T_n^p)$, a quotient of the polynomial ring by the ideal generated by $T_i^p$, $1\leq i\leq n$. Set $\mathscr{L}=\text{Der}(\mathscr{A}(n))$. Then $\mathscr{L}$ is a simple Lie algebra unless $n=1$ and $p=2$ (this simple Lie algebra is usually called a generalized Witt algebra, denoted by $W(n)$). Associated with the degrees of polynomial quotients, $\mathscr{L}$ becomes a graded algebra $\mathscr{L}=\sum_{d\geq -1}\mathscr{L}_{[d]}$. Then $\mathscr{L}$ has a filtration $$\mathscr{L}=\mathscr{L}_{-1}\supset\mathscr{L}_0\supset\cdots\supset\mathscr{L}_{n(p-1)-1}\supset 0.$$
where $$\mathscr{L}_{i}=\bigoplus_{j\geq i}\mathscr{L}_{[j]},\,\,-1\leq i\leq n(p-1)-1.$$
Let $ G=\on{Aut}(\mathscr{L}) $ be the automorphism group of $ \mathscr{L} $ and $\ggg=\Lie(G)$. 
 Then $G=\GL(n,\bbf)\ltimes U$ and $\ggg=\mathscr{L}_0$ with $\Lie(\GL(n,\bbf))=\mathscr{L}_{[0]}(\cong\mathfrak{gl}(n,\bbf))$, $\Lie(U)=\mathscr{L}_1=\sum_{d\geq 1}\mathscr{L}_{[d]}$ (see \cite{LN,Wil}). So $G$ is a semi-reductive group and $\mathscr{L}_0$ is a semi-reductive Lie algebra. According to the known results (cf. \cite{Nakano}, \cite{Shen}, \cite{Sk}, \cite{ShuYao1}, \cite{ShuYao2} and \cite{Zhang}, {etc}),  the representation theory of $W(n)$ along with other Cartan type Lie algebras are, to most extent, determined by that of its distinguished maximal filtered subalgebra $\mathscr{L}_0$. So the study of semi-reductive Lie algebra  $\mathscr{L}_0$ becomes a crucial important topic.

More generally, apart from $W(n)$, there are another three series of Cartan type restricted simple Lie algebras $S(n), H(n), K(n)$ (see \cite{SF}, \cite{St}, or the forthcoming Example \ref{lem pst ex3}).  Each of them is endowed with the corresponding graded structure.
Similarly, one can consider $\ggg=\sum_{d\geq 0}X(n)_{[d]}$ for $X\in \{S,H,K\}$ and $G=\Aut(X(n))$.
The corresponding semi-reductive groups and Lie algebras also appear as above.
\end{example}

\begin{example}\label{ex2} (Parabolic subalgebras of reductive Lie algebras in any characteristic) Let $\tilde G$ be a reductive algebraic group over $\bbf$ and $G$ be a  parabolic subgroup of $\tilde G$. Then $G$ is an semi-reductive group.
\end{example}

\begin{example} \label{ex22} Let $G$ be a connected reductive algebraic group over $\bbf$ satisfying that the characteristic of $\bbf$ is good for $G$, in the sense of \cite[\S3.9]{Hum2}.  For any given nilpotent $X\in\ggg$, let $G_X$ be the centralizer of $X$ in $G$, and  $\ggg_X=\Lie(G_X)$. By \cite[\S5.10-\S5.11]{Jan},  $G_X=C_X\ltimes R_X$ is semi-reductive.
\end{example}

\begin{example}\label{ex3} (Enhanced reductive algebraic groups) Let $G_0$ be a connected reductive algebraic group over $\bbf$, and $(M,\rho)$ a finite-dimensional representation of $G_0$ with representation space $M$ over $\bbf$.  Here and further, representations of an algebraical group always mean its rational representations.   Consider the product variety $G_0\times M$. Regard $M$ as an additive algebraic group. The variety $G_0\times M$ is endowed with a cross product structure denoted by $G_0\times_\rho M$,  by defining for any $(g_1,v_1), (g_2,v_2)\in  G_0\times M$
\begin{align}\label{product}
(g_1,v_1)\cdot (g_2,v_2):=(g_1g_2, \rho(g_1)v_2+v_1).
\end{align}
Then by a straightforward computation it is easily known that  $\underline{G_0}:=G_0\times_\rho M$ becomes a group with unity $(e,0)$ for the unity $e\in G_0$, and $(g,v)^{-1}=(g^{-1}, -\rho(g)^{-1}v)$.
Then $G_0\times_\rho M$ has a subgroup $G_0$ identified with $(G_0, 0)$ and a subgroup $M$ identified with  $(e, M)$.
Furthermore,  $\underline{G_0}$ is connected since $G_0$ and $M$ are irreducible varieties. We call $\underline{G_0}$ \textsl{ an enhanced reductive algebraic group associated with the representation space $M$}. What is more, $G_0$ and $M$ are closed subgroups of $\underline{G_0}$, and $M$ is a normal closed subgroup. Actually, we have $\ug\uv\ug^{-1}=\rho(g)v$. Here and further $\ug$ stands for $(g,0)$ and $\uv$ stands for $(e,v)$.
Set $\ggg_0=\Lie(G_0)$. Then  $(\mathsf{d}(\rho),M)$ becomes a representation of $\ggg_0$.  Naturally, $\Lie(\underline{G_0})=\ggg_0\oplus M$, with Lie bracket
 $$[(X_1,v_1),(X_2,v_2)]:=([X_1,X_2], \mathsf{d}(\rho)(X_1)v_2-\mathsf{d}(\rho)(X_2)v_1),$$
 which is called an enhanced reductive Lie algebra.

Clearly, $\uGz$ is a semi-reductive group with $M$ being the unipotent radical.
\end{example}


\subsection{}\label{1.2} In the sequel, we always assume that $G=G_0\ltimes U$ is a connected semi-reductive algebraic group  over  an algebraically closed field $\bbf$ where $G_0$ is a connected reductive subgroup and $U$ the unipotent radical. Let $\ggg=\Lie(G)$ be the  Lie algebra of $G$.
Recall that a Borel subgroup (resp. Borel subalgebra) of $ G $ (resp. $ \lieg $) is a maximal solvable subgroup (resp. subalgebra) of $ G $ (resp. $ \lieg $).
In the following we will illustrate the structure of Borel subgroups of $G$.
\begin{lemma}\label{startpoint}  
The following statements hold.
\begin{itemize}
\item[(1)] Suppose $B$ is a Borel subgroup of $G$. Then $B\supset U$. Furthermore, $B_0:=B\cap G_0$ is a Borel subgroup of $G_0$, and $B=B_0\ltimes U$.
\item[(2)] Any maximal torus $T$ of $G$ is conjugate to a maximal torus $T_0$ of $G_0$.
\end{itemize}
\end{lemma}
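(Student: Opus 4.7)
The plan for (1) is to first extract $U$ from any Borel $B$, then identify $B\cap G_0$ via the quotient map $p\colon G \to G/U \cong G_0$. For $U \subset B$: since $U$ is a closed connected normal subgroup of $G$, the product $BU$ is a closed connected subgroup of $G$, and it is solvable because $B$ is solvable and $U$ is a solvable normal subgroup of $BU$. Maximality of $B$ then forces $BU = B$, hence $U \subset B$. Once $U \subset B$, the decomposition $G = G_0 \ltimes U$ gives $B = (B\cap G_0)\cdot U$; setting $B_0 := B\cap G_0$, the intersection $B_0\cap U = \{e\}$ is automatic from the semidirect product, so $B = B_0 \ltimes U$. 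Finally, $B_0$ is a Borel of $G_0$: it is connected solvable, and any connected solvable $H_0 \supset B_0$ in $G_0$ lifts to $p^{-1}(H_0) = H_0 \ltimes U$, a connected solvable subgroup of $G$ containing $B$, which must coincide with $B$ by maximality, forcing $H_0 = B_0$.

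For (2), let $T$ be a maximal torus of $G$ and enclose it in a Borel $B \supset T$. By (1), $B = B_0 \ltimes U$ where $B_0$ is a Borel of $G_0$. Pick a maximal torus $T_0$ of $B_0$; since $B_0$ is a Borel of the reductive group $G_0$, $T_0$ is a maximal torus of $G_0$. The key step is to verify that $T_0$, viewed inside $B$, is a maximal torus of $B$. For any torus $T' \subset B$, the intersection $T' \cap U$ is simultaneously diagonalizable and unipotent, hence trivial, so $p|_{T'}\colon T' \to p(T') \subset B_0$ is an isomorphism of algebraic groups; since $T_0$ is maximal in $B_0$, this forces $\dim T' \leq \dim T_0$. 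Hence $T_0$ is a maximal torus of $B$. Because $B$ is connected solvable, all its maximal tori are $B$-conjugate, so $T$ is $B$-conjugate, and therefore $G$-conjugate, to $T_0 \subset G_0$.

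The main obstacle is a technical one: ensuring the correspondence between connected solvable subgroups of $G$ and of $G_0$ via $p$ behaves as expected, and in particular that $p(B) = B\cap G_0$ once $U \subset B$. The delicate point in (2) is the injectivity of $p$ on tori inside $B$, which is what reduces the conjugacy statement in $G$ to the classical conjugacy of maximal tori in the connected solvable group $B$.
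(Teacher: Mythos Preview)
Your proof is correct and follows essentially the same route as the paper's: for (1) both arguments show $U\subset B$ by maximality of $B$ against the solvable closed subgroup $BU$, then decompose $B=B_0\ltimes U$ and check $B_0$ is Borel in $G_0$ via maximality again; for (2) the paper simply invokes conjugacy of maximal tori in $G$ and says it ``follows from (1)'', while you spell out the implicit step (that a maximal torus $T_0$ of $B_0$ is already maximal in $B$, using $T'\cap U=\{e\}$) and then use conjugacy inside the solvable group $B$. Your use of the quotient map $p\colon G\to G/U\cong G_0$ is a clean organizing device---it makes the connectedness of $B_0=p(B)$ and the dimension bound $\dim T'\le\dim T_0$ transparent---but the underlying mathematics is the same as in the paper.
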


\begin{proof} (1) As $U$ is the unipotent radical of $G$,  $BU$ is still a closed subgroup containing $B$. We further assert that $BU$ is solvable. Firstly,  by a straightforward computation we have that the $i$th derived subgroup $\cd^i(BU)$ is contained in $\cd^{i}(B)U$. By the solvableness of $B$, there exists some positive integer $t$ such that $\cd^{t}(B)$ is the identity group $\{e\}$. So $\cd^t(BU)\subset U$. Secondly,  as $U$ is unipotent, and then solvable. So there exists some positive integer $r$ such that $\cd^r(U)=\{e\}$. Hence $\cd^{t+r}(BU)=\{e\}$.  The assertion is proved.

The maximality of the solvable closed subgroup $B$ implies that  $BU=B$. This is to say, $U$ is contained in the unipotent radical $B_u$ of $B$. Set $B_0=B\cap G_0$ which is clearly a closed solvable subgroup of $G_0$. By the same token as above, $B_0U$ is a closed solvable subgroup of $B$. On the other hand, for any $b\in B$, by the definition of semi-reductive  groups  we have $b=b_0u$ for some $b_0\in G_0$ and $u\in U$. As $U\subset B$, we have further that $b_0=bu^{-1}\in B_0 $. It follows that $B$ is contained in $B_0\ltimes U$. The remaining thing is to prove that $B_0$ is a Borel subgroup of $G_0$. It is clear that $B_0$ is a solvable closed subgroup of $G_0$. If $B_0$ is not a maximal solvable closed subgroup of $G_0$. We may suppose $B_0'$ is a larger one properly containing $B_0$. Then the solvable closed subgroup $B_0'U$ of $G$ contains $B$ properly. It contradicts with the maximality of $B$. Hence $B_0$ is really maximal. Summing up, the statement in (1) is proved.

(2) Note that the maximal tori in $G$ are all conjugate (see \cite[11.3]{Bor}). This statement follows from (1).
\end{proof}

{ As a corollary to Lemma \ref{startpoint}, we have the following fact for semi-reductive Lie algebras. }

\begin{lemma}\label{max torus} For a semi-reductive  group $G$ and $\ggg=\Lie(G)$,  all maximal tori of $\ggg$ are conjugate under adjoint action of $G$. Moreover, $\ggg_\sesi=\bigcup_{T}\Lie(T)$ where $\ggg_{\sesi}$ is the set consisting of all semisimple elements and $ T $ runs over all maximal tori of $ G $.
\end{lemma}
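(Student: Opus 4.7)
The first assertion is a Lie-algebra translation of Lemma~\ref{startpoint}(2). Viewing a maximal torus of $\ggg$ as $\Lie(T)$ for a maximal torus $T$ of $G$ and using $\Ad(g)\Lie(T)=\Lie(gTg^{-1})$, the $G$-conjugacy of maximal tori furnished by Lemma~\ref{startpoint}(2) immediately yields the $\Ad(G)$-conjugacy of their Lie algebras.

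For the second assertion, one direction is routine: $\bigcup_T\Lie(T)\subseteq\ggg_\sesi$ since any element of the Lie algebra of a torus acts semisimply under $\ad$. For the reverse inclusion, my plan is to reduce to the classical reductive situation by a $U$-conjugation. Given $x\in\ggg_\sesi$, decompose $x=x_0+u$ with $x_0=\on{pr}(x)\in\ggg_0$ and $u=\pi(x)\in\uuu$. I will produce $v\in\uuu$ with $\Ad(\exp v)(x)\in\ggg_0$; the resulting element is then still $\ad$-semisimple and sits in the reductive subalgebra $\ggg_0$, so the classical conjugacy theorem for $G_0$ places it in $\Lie(T_0')$ for some maximal torus $T_0'\subseteq G_0\subseteq G$, and pulling back by $\exp(-v)$ yields the desired maximal torus $T$ of $G$ with $x\in\Lie(T)$.

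To construct $v$, I use that $\uuu$ is an $\ad x$-stable ideal and that semisimplicity of $\ad x$ on $\ggg$ restricts to a splitting $\uuu=\ker(\ad x|_\uuu)\oplus[x,\uuu]$. The plan is to work inductively along a central series $\uuu=\uuu_1\supseteq\uuu_2\supseteq\cdots\supseteq\uuu_{N+1}=0$ of the nilpotent ideal $\uuu$, successively choosing $v_i\in\uuu_i$ so that after conjugation by $\exp(v_1)\cdots\exp(v_i)$ the element $x$ becomes congruent to $x_0$ modulo $\uuu_{i+1}$. At stage $i+1$ one must solve
\[
[v_{i+1},x]\equiv -r_{i+1}\pmod{\uuu_{i+2}}
\]
inside $\uuu_{i+1}/\uuu_{i+2}$, where $r_{i+1}\in\uuu_{i+1}$ is the residual $\uuu$-component produced at the previous step, and the semisimple decomposition of $\ad x$ on the graded piece is what makes $r_{i+1}$ solvable.

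The main obstacle is precisely this inductive step. On $\uuu$ the operators $\ad x$ and $\ad x_0$ differ by the nilpotent operator $\ad u$, and the central-series filtration is adapted to $\ad x_0$ rather than $\ad x$; thus one has to verify that the semisimple splitting for $\ad x|_\uuu$ descends compatibly to each graded quotient $\uuu_i/\uuu_{i+1}$ and that the residual $r_{i+1}$ lands in the image of $\ad x$ modulo $\uuu_{i+2}$. This is where the hypothesis that $x$ itself (not merely $x_0$) is semisimple enters essentially, and once it is verified the induction terminates in finitely many steps since $\uuu$ is nilpotent.
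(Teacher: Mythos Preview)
The paper offers no proof beyond declaring the lemma a corollary of Lemma~\ref{startpoint}. The second assertion is really a special case of Grothendieck's covering theorem \cite[14.25]{Bor}, valid for \emph{any} connected linear algebraic group, and the first follows from the general conjugacy of maximal tori \cite[11.3]{Bor} once one identifies the maximal tori of $\ggg$ with the subalgebras $\Lie(T)$ (an identification you assume without comment).

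Your route---conjugating a semisimple $x=x_0+u$ into $\ggg_0$ by an element of $U$ and then invoking the reductive case---is genuinely different from citing Grothendieck and can be made to work, but as written it has two gaps. First, the inductive step you flag is never carried out: you must show that the residual $\bar r_{i+1}\in\uuu_{i+1}/\uuu_{i+2}$ lies in the image of $\overline{\ad x}$, and merely asserting that ``this is where semisimplicity of $x$ enters'' is not a proof. The missing ingredient is that the quotient $\ggg\to\ggg/\uuu\cong\ggg_0$ preserves Jordan decompositions, so $x_0$ is itself semisimple; then on each graded piece $\overline{\ad x_0}=\overline{\ad x}$ is semisimple, and after stripping off the image-part of $\bar r_{i+1}$ by a further conjugation one is left in $\ggg/\uuu_{i+2}$ with $x_0+\bar r_0$, where $\bar r_0$ is nilpotent, commutes with the semisimple $x_0$, and the sum is semisimple---forcing $\bar r_0=0$ by uniqueness of the Jordan decomposition. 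Without this argument your induction does not even start. Second, writing $\Ad(\exp v)$ is unsafe in positive characteristic once the nilpotency class of $\uuu$ reaches $p$; since the paper works over an arbitrary algebraically closed field (see Example~\ref{lem pst ex3}), you must either replace the exponential by a direct argument with elements of $U$ or restrict the characteristic.
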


By Lemmas \ref{startpoint} and \ref{max torus}, we can choose a maximal torus $T$ of $G$, which lies in $G_0$ without loss of generality. By \cite[\S8.17]{Bor},  we have the following decomposition of root spaces associated with  $\ttt:=\on{Lie}(T)$
\begin{align}\label{root decomp}
\ggg=\ttt\oplus\sum_{\alpha\in \Phi(G_0,T)}(\ggg_0)_\alpha\oplus\sum_{\alpha\in \Phi(U,T)}\uuu_\alpha
\end{align}
 where $\Phi(U,T)$ is the subset of $X(G,T)$ satisfying that for $\uuu:=\Lie(U)$,
$$\uuu=\sum_{\alpha\in \Phi(U,T)}\uuu_\alpha.$$

In the consequent arguments, the root system $\Phi(G_0,T)$ will be denoted by $\Delta$ for simplicity, which is actually independent of the choice of $T$. We fix a positive root system $\Delta^+:=\Phi(G_0,T)^+$. The corresponding Borel subgroup will be denoted by  $B^+$ which contains $T$, and the corresponding simple system is denoted by $\Pi$.

\subsection{}\label{1.3}

The following facts are clear.

\begin{lemma}\label{fund-1} Let $G$ be a connected semi-reductive  group, and $T$  a  maximal torus  of $G_0$.
\begin{itemize}

\item[(1)] Set $\cw(G,T):=\Nor_G(T)\slash \Cent_G(T)$. Then $\cw(G,T)\cong \cw$, where $\cw$ is the Weyl group of $G_0$. This is to say, $G$ admits the Weyl group $\cw$ coinciding with the one of $G_0$.

\item[(2)] Set $\{\dot w\mid w\in W\}$ be a set of representatives in $N_G(T)$ of the elements of $\cw$. Denote by $C(w)$ the double coset $B^+\dot w B^+$, and  by $C_0(w)$ the corresponding component $B^+_0\dot w B^+_0$ of the Bruhat decomposition of $G_0$. Then  for any $w\in \cw$
\begin{align}\label{cells}
C(w)=C_0(w)\ltimes U.
\end{align}
\item[(3)] Let $w=s_1\cdots s_h$ be a reduced expression of $w\in \cw$, with $s_i=s_{\gamma_i}$ for $\gamma_i\in \Pi$. Then $C(w)\cong \bba^h\times B^+$.
\item[(4)] For $w\in \cw$, set $\Phi(w):=\{\alpha\in \Delta^+\mid w\cdot \alpha\in -\Delta^+\}$, and $U_w:=\Pi_{\alpha\in \Phi(w)}U_\alpha$, then $U_{w^{-1}}\times B^+\cong C(w)$ via sending $(u,b)$ onto $u\dot w b$.
\item[(5)] (Bruhat Decomposition) We have $G={\dot\bigcup}_{w\in \cw} B^+\dot w B^+$.
Therefore, for any $g\in G$, $g$ can be written uniquely in the form $u\dot w b$ with $w\in \cw$, $u\in U_{w^{-1}}$ and $b\in B^+$.
\end{itemize}
\end{lemma}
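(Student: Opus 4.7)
The plan is to reduce every claim to its known counterpart for the reductive group $G_0$, exploiting the normality of $U$ in $G$ and the decomposition $B^+=B_0^+\ltimes U$ from Lemma \ref{startpoint}(1). Throughout one takes $T\subseteq G_0$ by Lemma \ref{startpoint}(2), and uses freely that the semi-direct product is internal, so $G_0\cap U=\{e\}$.

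For (1), I would work through the projection $\pi\colon G\twoheadrightarrow G/U\cong G_0$ and aim to establish the identities $\Nor_G(T)=\Nor_{G_0}(T)\cdot \Cent_U(T)$ and $\Cent_G(T)=\Cent_{G_0}(T)\cdot \Cent_U(T)$. The key input is that for $u\in U$ and $t\in T$ one can write $utu^{-1}=v\cdot t$ with $v:=utu^{-1}t^{-1}\in U$, using that $T$ normalizes $U$. This, combined with $TU\cap G_0=T$, forces any $g=g_0u\in \Nor_G(T)$ to satisfy $u\in \Cent_U(T)$ and $g_0\in \Nor_{G_0}(T)$; a parallel computation, imposing $gt=tg$ for all $t\in T$, handles $\Cent_G(T)$. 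Taking the quotient then yields $\cw(G,T)\cong \cw(G_0,T)=\cw$.

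For (2), I would expand $C(w)=(B_0^+U)\dot w(B_0^+U)$ and use normality of $U$ in $G$ to slide $U\dot w=\dot w U$ and $UB_0^+=B_0^+U$, collecting all $U$-factors to the right. This yields $C(w)=B_0^+\dot w B_0^+\cdot U=C_0(w)\cdot U$, and since $C_0(w)\subseteq G_0$ meets $U$ trivially, the product is semi-direct. For (3) and (4), note that $\Phi(w^{-1})\subseteq\Delta^+\subseteq\Phi(G_0,T)$, so $U_{w^{-1}}$ already lies in $G_0$ and coincides with the analogous product formed in $G_0$; the classical reductive Bruhat isomorphism $U_{w^{-1}}\times B_0^+\xrightarrow{\sim}C_0(w)$, $(u,b_0)\mapsto u\dot w b_0$, combined with $B^+=B_0^+\ltimes U$, upgrades to $U_{w^{-1}}\times B^+\xrightarrow{\sim}C(w)$, which gives (4); the well-known isomorphism $U_{w^{-1}}\cong\bba^h$ then supplies (3). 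For (5), the Bruhat decomposition $G_0=\dot\bigcup_{w\in\cw}C_0(w)$ together with (2) gives $G=G_0\cdot U=\dot\bigcup_w C(w)$; uniqueness of $u\dot w b$ follows by applying $\pi$ to reduce to uniqueness in $G_0$, then cancelling the residual $U$-part.

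The main subtlety, to my mind, is part (1): one must rule out ``twisted'' $T$-normalizing elements $g_0u$ in which neither factor individually normalizes $T$. The commutator computation $utu^{-1}=v\cdot t$ above, together with the internal nature of the semi-direct product, is what forces the clean product decomposition of $\Nor_G(T)$. Once (1) is in hand, parts (2)--(5) follow by essentially formal manipulations driven by the normality of $U$ in $G$.
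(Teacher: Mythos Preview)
Your proof is correct and, for parts (2)--(5), follows essentially the same route as the paper: expand $C(w)$ using $B^+=B_0^+U$, slide $U$ through $\dot w$ and $B_0^+$ by normality to get $C(w)=C_0(w)\ltimes U$, and then invoke the standard Bruhat cell description for $G_0$ (the paper cites \cite[Lemma 8.3.6 and Theorem 8.3.8]{Sp} for (3), (4) and (5)).

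Where you differ is in part (1). The paper simply asserts $\Nor_G(T)=\Nor_{G_0}(T)$ and $\Cent_G(T)=\Cent_{G_0}(T)$ and stops there. That equality is equivalent to $\Cent_U(T)=\{e\}$; it holds, for instance, whenever $0\notin\Phi(U,T)$ (in particular under the positivity condition), but the lemma is stated without that hypothesis, and it can fail in general (e.g.\ $G=G_0\times\mathbb{G}_a$ with trivial action). Your argument, establishing instead the product decompositions $\Nor_G(T)=\Nor_{G_0}(T)\cdot\Cent_U(T)$ and $\Cent_G(T)=\Cent_{G_0}(T)\cdot\Cent_U(T)$ via the commutator identity $utu^{-1}=(utu^{-1}t^{-1})\cdot t\in U\cdot T$ and then passing to the quotient, is the robust way to obtain $\cw(G,T)\cong\cw$ without that tacit extra assumption. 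So your treatment of (1) is in fact more careful than the paper's, at the cost of a slightly longer computation; the paper's version is cleaner but needs the additional hypothesis $\Cent_U(T)=\{e\}$ to be literally correct.
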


\begin{proof}
(1) Note that $U$ is the unipotent radical of $G$, and $G=G_0\ltimes U$. We have $\Nor_G(T)=\Nor_{G_0}(T)$ and $\Cent_G(T)=\Cent_{G_0}(T)$. The statement is proved.

 (2) Note  that  $U\dot w=\dot w U$ and $B^+=B^+_0U=UB^+_0$ for $B^+_0=B^+\cap G_0$. We have
\begin{align}
C(w)&=B^+_0U\dot w B^+_0U\cr
&=B^+_0\dot w UB^+_0U\cr
&=B^+_0\dot wB^+_0 U\cr
&=C_0( w)\ltimes U.
\end{align}

(3) and (4) follow from (2) along with \cite[Lemma 8.3.6]{Sp}.

(5) follows from (2) along with the Bruhat decomposition Theorem for $G_0$ (\cite[Theorem 8.3.8]{Sp}).
\end{proof}

\begin{proposition}\label{basic for fiber bundle} Let $G$ be a connected semi-reductive  group. Keep the notations as in Lemma \ref{fund-1}. The following statements hold.
\begin{itemize}
\item[(1)] The group $G$ admits a unique open double coset $C(w_0)$, where $w_0$ is the longest element of $\cw$.
\item[(2)] For any given Borel subgroup $B^+$, let $\cb$ denote the homogeneous space $G\slash B^+$, and $\pi:G\rightarrow \cb$ be the canonical morphism. Then $\pi$ has local sections.

\item[(3)] For a rational  $B^+$-module $M$, there exists a fibre bundle over $\cb$ associated with $M$,  denoted by $G\times^{B^+} M$.

\end{itemize}
\end{proposition}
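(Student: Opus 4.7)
The plan is to handle the three parts in sequence, in each case leveraging the Bruhat decomposition of Lemma \ref{fund-1} together with classical facts about the reductive subgroup $G_0$.

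For part (1), I would argue via dimension. From Lemma \ref{fund-1}(3) one has $\dim C(w)=\ell(w)+\dim B^+$, so $\dim C(w_0)=|\Delta^+|+\dim T+|\Delta^+|+\dim U=\dim G$, while every other cell $C(w)$ has strictly smaller dimension. Combined with the disjoint decomposition in Lemma \ref{fund-1}(5), this shows $C(w_0)$ is the unique cell of full dimension, and its closure is all of the irreducible variety $G$. For openness I would use the identification $C(w_0)=C_0(w_0)\ltimes U$ from Lemma \ref{fund-1}(2): under the variety isomorphism $G\cong G_0\times U$ coming from the semidirect product, $C(w_0)$ corresponds to $C_0(w_0)\times U$, and since $C_0(w_0)$ is the classical open big cell of $G_0$, the set $C(w_0)$ is open in $G$.

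For part (2), the key observation is that $U$ is normal in $G$ and contained in $B^+=B_0^+\ltimes U$, so set-theoretically $\cb = G/B^+$ coincides with $G_0/B_0^+$, the flag variety of the reductive group $G_0$, via the inclusion $G_0\hookrightarrow G$. The canonical morphism $\pi_0:G_0\to G_0/B_0^+$ admits local sections by the classical theory (one inverts the parametrization of the big cell $U_{w_0}^-\hookrightarrow G_0\to G_0/B_0^+$ and translates it around $\cb_0$ by elements of $G_0$). Composing such local sections with the inclusion $G_0\hookrightarrow G$ furnishes local sections of $\pi:G\to\cb$.

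For part (3), with local sections $\sigma_i:U_i\to G$ of $\pi$ over an open cover $\{U_i\}$ of $\cb$ in hand, I would construct $G\times^{B^+}M$ as the geometric quotient of $G\times M$ under the free right action $(g,m)\cdot b=(gb,b^{-1}m)$, with local trivializations
\[
U_i\times M\;\xrightarrow{\sim}\;\pi^{-1}(U_i)\times^{B^+}M,\qquad (x,m)\mapsto [\sigma_i(x),m],
\]
whose transition cocycles $b_{ij}:U_i\cap U_j\to B^+$ act rationally on $M$, so the associated bundle is well defined as an algebraic fibre bundle over $\cb$ with fibre $M$.

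The point requiring most care is in part (2): one must verify that the set-theoretic identification $G/B^+\cong G_0/B_0^+$ actually upgrades to an isomorphism of algebraic varieties and that the sections transport as morphisms. Granting this, part (1) is a dimension-plus-openness argument from Lemma \ref{fund-1}, and part (3) is the standard construction of associated bundles from a principal bundle with local sections, so the whole proposition really rests on transporting the classical reductive picture across the semidirect product decomposition $G=G_0\ltimes U$.
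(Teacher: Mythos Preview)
Your proposal is correct, but the route differs from the paper's in two places worth noting.

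For (1), the dimension computation matches the paper's. For openness, however, the paper argues intrinsically: $C(w_0)$ is a $B^+\times B^+$-orbit, hence locally closed (open in its closure), and the dimension count forces $\overline{C(w_0)}=G$ since $G$ is irreducible. You instead transport the question to $G_0$ via the product decomposition $G\cong G_0\times U$ and invoke the classical openness of $C_0(w_0)$. Both are valid; the paper's version is slightly more self-contained, while yours makes the reduction to the reductive case explicit.

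For (2), the difference is more substantial. The paper never leaves $G$: it uses the isomorphism $U_{w_0^{-1}}\times B^+\cong C(w_0)$ of Lemma~\ref{fund-1}(4) to read off a section over the open image $X(w_0)=\pi(C(w_0))$, then covers $\cb$ by the translates $gX(w_0)$. Your approach instead identifies $\cb=G/B^+$ with $G_0/B_0^+$ and imports the classical local sections along $G_0\hookrightarrow G$. As you correctly flag, this requires upgrading the set-theoretic bijection $G_0/B_0^+\to G/B^+$ to an isomorphism of varieties; one clean way is to note that the semidirect-product projection $G\to G_0$ is a morphism carrying right $B^+$-cosets to right $B_0^+$-cosets (since $U\lhd G$ and $B^+=B_0^+U$), so the universal property of the quotient supplies a morphism $G/B^+\to G_0/B_0^+$ inverse to the obvious one. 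The paper's route sidesteps this verification entirely; yours is more conceptual once that lemma is in place.

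For (3), both the paper and you simply invoke the standard construction of an associated bundle once local sections exist; the paper cites \cite[Lemma 5.5.8]{Sp}, and your gluing sketch is the content of that lemma.
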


\begin{proof} (1) Note that $C(w_0)$ is a $B^+\times B^+$-orbit in $G$ under double action, and $C(w_0)$ is open in its closure. Thanks to Lemma \ref{fund-1}(4), we have $$\dim C(w_0)=\#\{\Delta^+\}+\dim B^+=\dim G.$$
On the hand, $G$ is irreducible, so $G=\overline {C(w_0)}$. It follows that $C(w_0)$ is an open subset of $G$. By the uniqueness of the longest element in $\cw$ and Lemma \ref{cells}(3)(5), we have that $C(w_0)$ is a unique open double coset.

(2) According to \cite[\S5.5.7]{Sp}, we only need to certify that (a) $\cb$ is covered by open sets $\{U\}$; (b) each of such open sets has a section, i.e. a morphism $\sigma: U\rightarrow \cb$ satisfying $\pi\circ\sigma=\id_U$. Thanks to \cite[Theorem 5.5.5]{Sp},  $\pi$ is open and separable. Let $X(w)=\pi(C(w))$. By (1), $X(w_0)$ is an open set of $\cb$. By Lemma \ref{fund-1}(4),  $\pi$ has a section on $X(w_0)$. Note that $\{gX(w_0)\mid g\in G\}$ constitute of open covering of $\cb$. Using the translation, the statement follows from the argument on $X(w_0)$.

(3) follows from (2) and \cite[Lemma 5.5.8]{Sp}.
\end{proof}

\subsection{Regular cocharacters and the positivity condition}\label{reg}
Keep the notations and assumptions as above subsections. Let $T$ be a given maximal torus of $G_0$. By \cite[\S12.2]{Bor}, there is a regular semisimple element $t\in T$ such that $\alpha(t)\ne 1$ for any $\alpha\in \Phi(G_0,T)$. Furthermore, by \cite[Lemma 3.2.11]{Sp}, $X_*(T)\otimes \bbf^\times\rightarrow T$ with $\tau\otimes a\in X_*(T)\otimes \bbf^\times$ mapping to $\tau(a)\in T$ gives rise to an isomorphism of abelian groups. So there is a regular cocharacter $\tau\in X_*( T)$ such that $t=\tau(a)$ for some $a\in \bbf^\times$. Thus, we have
\begin{align}\label{regular coch1}
\langle\alpha,\tau\rangle\ne 0 \mbox{ for all }\alpha\in \Phi(G_0,T).
\end{align}
By \cite[\S7.4.5-\S7.4.6]{Sp}, we can further choose a regular cocharacter $\tau\in X_*(T)$ for the reductive group $G_0$ such that
\begin{align}\label{reg pos}
 \langle\alpha,\tau\rangle>0 \mbox{ for all }\alpha\in \Phi(G_0,T)^+.
 \end{align}

We turn to semi-reductive  case. Let $G$ be a semi-reductive  algebraic group and $\ggg=\Lie(G)$.
 \begin{convention}\label{positive} The group $G$ (or the Lie algebra $\ggg$) is said to satisfy  {\sl {the positivity condition}} if there exists a cocharacter $\chi\in X_*(T)$ such that $ \langle\alpha, \chi\rangle >0 \mbox{ for\,any }\alpha\in \Phi(G_0,T)^+\cup \Phi(U,T).$
\end{convention}

The positivity condition can be seen to valid in the following examples we are interested {\color{purple} in}.



\begin{example}\label{lem pst ex}
 Suppose $\ggg$ is  the Lie algebra of a semi-reductive algebraic group $G=\GL(V) \times_\nu V $ over $\bbf$ where $\nu$ is the natural representation of $\GL(V)$ (see Example \ref{ex3}). Then $\ggg$ satisfies the positivity condition.

Actually, suppose $ \on{dim}V=n. $ Fix a basis $\{v_i\mid 1\leq i\leq n\}$ of $V$ associated with the isomorphism $\GL(V)\cong\GL(n,\bbf).$ We
take the standard maximal torus $T$ of $\GL(n,\bbf)$  consisting of diagonal   invertible matrices. Set $B^+$ the Borel subgroup consisting of upper triangular invertible matrices, which corresponds to the positive root system. And take $\varepsilon_i\in X^*(T)$ with $\varepsilon_i(\diag(t_1,\cdots,t_n))=t_i$, $i=1,\cdots,n$. Then the character group $X^*(T)$ is just the free abelian group generated by $\varepsilon_i$, $i=1,\cdots,n$;
and the positive root system $\Phi(G_0,T)^+$ consists of all $\varepsilon_i-\varepsilon_j$ with  $1\leq i<j\leq n$. Set $\alpha_i=\varepsilon_i-\varepsilon_{i+1}$ for $i=1,\cdots,n-1$. Then $\Pi=\{\alpha_1,\cdots,\alpha_{n-1}\}$ becomes a fundamental root system of $\GL(V)$, and any positive root $\varepsilon_i-\varepsilon_j$ is expressed as $\sum_{s=i}^{j-1}\alpha_s$ for $i<j$.  Then by (\ref{product}), we have that $\Ad(\textbf{t})v_i=t_iv_i=\varepsilon_i(\textbf{t})v_i$ for $\textbf{t}=\diag(t_1,\cdots,t_n)\in T$. It follows that $\Phi(V,T)=\{\varepsilon_1,\varepsilon_2, \cdots, \varepsilon_n\}$ is the set of weights of $V$ with respect to $T$.

Set $\varpi:=\sum_{i=1}^n (n-i+1)\varepsilon_i$. The corresponding cocharacter is denoted by $\chi$, which is a regular cocharacter of $\GL(V)$. Then  $\langle \varepsilon_i,\chi\rangle=(n-i+1)$ and $\langle \alpha_j, \chi\rangle=1$ for $1\leq i\leq n$ and $1\leq j\leq n-1$. Hence $\GL(V)\times_\nu V$ satisfies the positivity condition \ref{positive}.
\end{example}

\begin{example}\label{lem pst ex2} 
 Suppose $\ggg$ is  the Lie algebra of a parabolic subgroup of a reductive algebraic group over $\bbf$ (see Example \ref{ex2}). Then the positivity condition is assured for  $\ggg$.

We can show this below. Suppose $P$ is a parabolic subgroup of a connected reductive algebraic group $G$. In this case, we can take a regular co-character $\chi$ of $G$ such that  $\langle\alpha,\chi\rangle >0$ for all positive root $ \alpha $ of $G$.
\end{example}

 \begin{example}\label{lem pst ex3} (Continued to Example \ref{ex1}) In this example, we  will further make some preliminary  introduction to  Cartan type Lie algebras in the classification of finite-dimensional simple Lie algebras over an algebraically closed filed of positive characteristic, which do not arise from simple algebraic groups (see \cite{KS, PreSt, St}). Suppose $\bbf$ is an algebraically closed field of characteristic $p>2$ for the time being. There are four types  of those Lie algebras,
 {\color{purple}and }each type of them contains  infinite series of ones. We will introduce the first two types: Witt type $W(n)$ ($\geq 1$) and Special type $S(n)$ ($n\geq 2$) (the remaining two types are Hamiltonian type $H(n)$ for even $n\geq 2$ and contact type $K(n)$ for odd $n\geq 3$,  see \cite{SF, St} for details).

    By definition, the restricted Lie algebras of Cartan type are $p$-subalgebras of the algebra of derivations of the truncated polynomial rings{
    $\mathscr{A}(n)=\bbf[T_1,\cdots,
    T_n]\slash (T_1^p,\ldots,T_n^p)$}, whose canonical generators are denoted by $x_i$, $i=1,\ldots n$.
Recall $W(n)=\Der(\mathscr{A}(n))$ (the $n$-th Jacobson-Witt algebra). Its restricted mapping is the standard $p$-th power of linear operators. Denote by $\partial_i$ the partial derivative with respect to the variable $x_i$ . Then $\partial_i$, $i=1,\ldots,n$ is a basis of the $\mathscr{A}(n)$-module $W(n)$. The Lie algebra $W(n)$ has  a restricted grading $W(n)=\bigoplus_{i=0}^{q} W(n)_{[i]}$ where $W(n)_{[i]}=\sum_{k=1}^n\mathscr{A}_{i+1}\partial_i$ associated with the natural grading of $\mathscr{A}$ arising from the degree of truncated polynomials, and $q=n(p-1)-1$. Let $\Omega$ denote the exterior differential algebra over $\mathscr{A}(n)$ in the usual sense. Then $\Omega(n)=\sum_{i=1}^n\Omega(n)^i$ with $\Omega(n)^i=\bigwedge^i\Omega(n)^1$ for $\Omega(n)^1:=\Hom_{\mathscr{A}(n)}(W(n),\mathscr{A}(n))$ which can be presented as $\Omega(n)^1=\sum_{i=1}^r\mathscr{A}(n)\sfd x_i$  where $\sfd $ is an $\bbf$-linear map from $\Omega^0(n):=\mathscr{A}(n)$ to $\Omega^1(n)$ via $\sfd f:E\mapsto E(f)$ for any $f\in \mathscr{A}(n)$ and $E\in W(n)$. The exterior differential algebra $\Omega(n)$ is canonically endowed with a $W(n)$-action (see \cite[\S4.2]{St}).

Associated with the volume differential form $\omega_S:=\sfd x_1\wedge\cdots\wedge \sfd x_n$, there is a (Cartan-type) simple Lie algebra of Special type $S(n):=S'(n)^{(1)}$ for $S'(n)=\{E\in W(n)\mid E\omega_S=0\}$.  Both $W(n)$ and $S(n)$ are simple Lie algebras (see \cite{KS, PreSt, SF, St}), the latter of which is a graded Lie subalgebra of the former. For $\ggg=W(n)$ or $S(n)$, the graded structure  $\ggg=\sum_{i\geq -1}\ggg_{[i]}$ gives rise to the filtration $\{\ggg_i\}$ with $\ggg_i=\sum_{j\geq i}\ggg_{[j]}$. Denote by $G=\Aut(\ggg)$. Then $G$ is compatible with the restricted mapping, and $G$ is connected provided that  $p\geq 5$ is additionally required for $\ggg=W(1)$. Furthermore,  the following properties are satisfied.
\begin{itemize}
\item[(1)] $G=\GL(n)\ltimes U$ where $U$  is the unipotent radical of G consisting of elements $\sigma\in G$: $(\sigma-\id_\ggg)(\ggg_i)\subset \ggg_{i+1}$. So $G$ is semi-reductive.

\item[(2)] $\Lie(G)=(\Cg)_0$ where
\begin{align}
\Cg:=\begin{cases} W(n)  &\text{ if }\ggg=W(n);\cr
\{E\in W(n)\mid E\omega_S\in \bbf\omega_S\}; &\text{ if } \ggg=S(n)
\end{cases}
\end{align}
which is a graded subalgebra of $W(n)$ (see \cite[Proposition 3.2]{LN}). Actually, $\CS(n)=S'(n)+\bbf x_1\partial_1$ (see \cite[\S4.2]{St}). So $\Cg=(\Cg)_{[0]}\oplus (\Cg)_1$ with $(\Cg)_{[0]}=W_{[0]}\cong\gl(n)$ and $(\Cg)_1\subset W(n)_1$.
\end{itemize}
 In particular, for $\ggg=W(n)$ we have $G=\GL(n)\ltimes U $ with $\Lie(G)=W(n)_0$, and $\on{Lie}(U)=W(n)_{1}$. Take a maximal torus $T$ and a positive Borel subgroup $ B^+ $ as Example \ref{lem pst ex}. Then $\Phi(G_0,T)^+=\{  \varepsilon_i-\varepsilon_j\mid 1\leq i<j\leq n\}$. For each $\textbf{t}=\diag(t_1,\cdots,t_n)\in T,$ $\Ad(\textbf{t})(x_1^{a_1}\cdots x_n^{a_n}\p_j)= (\sum_{i=1}^{n}a_i\varepsilon_i-\varepsilon_j) (\textbf{t})x_1^{a_1}\cdots x_n^{a_n}\p_j.$ Therefore,
$$ \Phi(U,T)=\left\{ \sum_{i=1}^{n}a_i\epsilon_i-\epsilon_j\mid 0\leq a_i\leq p-1\text{ and } \sum_{i=1}^{n}a_i\geq 2 \right\}. $$

Set $\varpi:=\sum_{i=1}^n (2n-i)\varepsilon_i$. The corresponding cocharacter is denoted by $\chi$, which is a regular cocharacter of $\GL(V)$. Then $\langle\varepsilon_i-\varepsilon_j,\chi\rangle=j-i$ for $1\leq i<j\leq n$ and
$$\langle  \sum_{i=1}^{n}a_i\varepsilon_i-\varepsilon_j,\chi\rangle= \sum_{i=1}^{n}a_i(2n-i)-(2n-j)\geq \sum_{i=1}^{n}a_in-(2n-1)\geq 2n-(2n-1)>0$$
for all $ \sum_{i=1}^{n}a_i\varepsilon_i-\varepsilon_j\in \Phi(U,T)$. Hence associated with $W(n)$,  $\Lie(G)$  satisfies the positivity condition \ref{positive}.

Associated with $S(n)$, $\Lie(G)$ still contains the reductive Lie subalgebra part isomorphic to $\gl(n)$ along with  the unipotent part $\Lie(U)\subset W(n)_1$. So the same arguments yield the assertion for $\Lie(G)$ associated with $S(n)$.

\end{example}

\begin{remark} Associated with $\ggg=H(n)$ and $K(n)$, $G=\Aut(\ggg)$ still turn out to be a connected semi-reductive group. However, the above argument is not available to the Hamiltonian algebra $H(n)$ and the contact algebra $K(n)$.
Indeed, for $ H(n)$ with $ n=2m, $ take  a maximal torus $ \lieh=\langle h_i:=x_i\partial_{i}-x_{i+m}\partial_{i+m} \mid 1\leq i\leq m\rangle $, and $ \alpha_i $ the fundamental weight with $ \alpha_i(h_j)=\delta _{ij} $ for $1\leq i, j\leq m$. Note that both $ x_1^2\p_{m+1}$ and $x_{m+1}^2\p_1 $ lie in $ H(n)_{[1]},$ so that $ \pm3\alpha_1\in \Phi(U,T). $ Hence, the positivity condition fails for $ H(n). $ While for $K(n)$ with $ n=2m+1, $ take  a maximal torus $ \lieh=\langle h_i:=x_i\partial_{i}-x_{i+m}\partial_{i+m}, h_{m+1}:=\sum_{j=1}^{2m}x_j\partial_j+2x_{2m+1}\partial_{2m+1} \mid 1\leq i\leq m\rangle $, and $ \alpha_i $ the fundamental weight with $ \alpha_i(h_j)=\delta _{ij} $ for $1\leq i, j\leq m+1$. Note that both $3x_1^2x_{2m+1}^{\frac{p-1}{2}}\p_{m+1}+x_1^3x_{2m+1}^{\frac{p-3}{2}}(\sum_{j=1}^{2m}x_j\partial_j-x_{2m+1}\partial_{2m+1})$ and $3x_{m+1}^2x_{2m+1}^{\frac{p-1}{2}}\p_1+x_{m+1}^3x_{2m+1}^{\frac{p-3}{2}}(\sum_{j=1}^{2m}x_j\partial_j-x_{2m+1}\partial_{2m+1}) $ lie in $ K(n)_{[p]},$ so that $ \pm3\alpha_1\in \Phi(U,T). $ Hence, the positivity condition fails for $ K(n). $
\end{remark}

Analogue to \cite[Proposition 2.11]{Jan}, we have the following paralleling result for a semi-reductive algebraic group.

\begin{proposition}\label{semisimple orbit}
Suppose that $G=G_0\ltimes U$ is a semi-reductive  algebraic group over $\bbf$ with a maximal torus $T$ and satisfies the positivity condition \ref{positive}. Let $\ggg=\Lie(G)$ and $\ggg_0=\Lie(G_0)$.  Then for any $X\in \ggg$ with the Jordan-Chevalley decomposition $X=X_s+X_n$,  we have $X_s\in \overline{\co_X}$. In particular, if $ X $ is nilpotent, then $ 0\in \overline{\co_X}. $
\end{proposition}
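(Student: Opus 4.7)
The strategy is to mimic the classical proof for reductive groups (cf.\ Jantzen, Proposition~2.11), exploiting the positivity cocharacter $\chi$ of Convention~\ref{positive} to contract the root-space components of $X$ while fixing its semisimple part. By Lemma~\ref{max torus}, $X_s$ lies in $\Lie(T')$ for some maximal torus $T'$ of $G$; since all maximal tori of $G$ are $G$-conjugate, after replacing $X$ by $\Ad(g)X$ for a suitable $g\in G$ we may assume $X_s\in\ttt$. The relation $[X_s,X_n]=0$ then places $X_n$ in the centralizer $\ggg^{X_s}$, which by the root decomposition (\ref{root decomp}) equals
\[
\ggg^{X_s}=\ttt\oplus\bigoplus_{\alpha\in\Delta,\;\alpha(X_s)=0}(\ggg_0)_\alpha\oplus\bigoplus_{\alpha\in\Phi(U,T),\;\alpha(X_s)=0}\uuu_\alpha.
\]

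One checks directly that $Z_G(X_s)=Z_{G_0}(X_s)\ltimes Z_U(X_s)$ (using that $\uuu$ is an ideal, so $\Ad(u)X_s-X_s\in\uuu$ for all $u\in U$), so the reductive part of the centralizer is the Levi $Z_{G_0}(X_s)$ of $G_0$. I would further conjugate $X_n$ by an element of $Z_{G_0}(X_s)$ so as to push its $\ggg_0$-projection into $\bigoplus_{\alpha\in\Delta^+,\,\alpha(X_s)=0}(\ggg_0)_\alpha$, the nilpotent radical of a Borel subalgebra of this Levi; this is the classical theorem that any nilpotent element of a reductive Lie algebra is conjugate to an element of the nilpotent radical of some Borel, applied inside $Z_{G_0}(X_s)$ (note that the $\ggg_0$-projection of $X_n$ is indeed ad-nilpotent on $\ggg_0$ because $\uuu$ is an ideal and $\text{ad}(X_n)$ descends nilpotently to $\ggg/\uuu\cong\ggg_0$). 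Since $Z_{G_0}(X_s)\subset G_0$ normalises $U$, the $\uuu$-component of $X_n$ remains in $\uuu$, and after this conjugation $X$ belongs to $\ttt\oplus\nnn_0^+\oplus\uuu$ with $\ttt$-component equal to $X_s$. Writing $X=X_s+\sum_{\alpha\in\Delta^+\cup\Phi(U,T)}X_\alpha$ with each $X_\alpha$ in the indicated root space, a direct computation gives
\[
\Ad(\chi(t))X=X_s+\sum_{\alpha\in\Delta^+\cup\Phi(U,T)}t^{\langle\alpha,\chi\rangle}X_\alpha,
\]
and Convention~\ref{positive} guarantees that every exponent $\langle\alpha,\chi\rangle$ is strictly positive. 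Letting $t\to 0$ therefore gives $\Ad(\chi(t))X\to X_s$, so $X_s\in\overline{\co_X}$; the nilpotent case is the specialisation $X_s=0$.

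The principal obstacle is the intermediate step of conjugating $X_n$ into $\nnn_0^+\oplus\uuu$. This rests on (i) the structural identification $Z_G(X_s)=Z_{G_0}(X_s)\ltimes Z_U(X_s)$, ensuring that the centralizer is itself semi-reductive with a genuine Levi of $G_0$ as its reductive part and with Lie algebra $\ggg^{X_s}$, and (ii) the classical conjugacy result for nilpotent elements in a reductive Lie algebra, which is standard under the good-characteristic hypotheses implicit in the paper. Compatibility with the $\uuu$-part is automatic because $G_0$ normalises $U$, so the $\uuu$-component is never disturbed into the $\ggg_0$-direction.
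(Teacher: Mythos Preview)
Your argument is correct and follows essentially the same strategy as the paper's proof: reduce to the situation where $X$ lies in the fixed Borel $\ttt\oplus\nnn_0^+\oplus\uuu$ with $\ttt$-component equal to $X_s$, and then contract to $X_s$ via $\Ad(\chi(t))$ using the positivity condition. The only difference is in how the reduction is justified: the paper invokes Lemmas~\ref{startpoint} and~\ref{fund-1} to conjugate $X$ directly into $\Lie(B)$ (so that $X_s$ automatically lands in $\ttt$), whereas you first place $X_s$ in $\ttt$ via Lemma~\ref{max torus} and then work inside the semi-reductive centralizer $Z_G(X_s)=Z_{G_0}(X_s)\ltimes Z_U(X_s)$ to push $\on{pr}(X_n)$ into $\nnn_0^+$. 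Your route is a bit more explicit about why the $\ttt$-component of the conjugated $X$ really is $X_s$ (a point the paper treats as evident), at the cost of invoking the classical nilpotent-in-Borel result for the Levi $Z_{G_0}(X_s)$; the paper's route is shorter but leans on the global Borel conjugacy. Both are standard and lead to the identical cocharacter computation.
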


\begin{proof} By Lemmas \ref{startpoint} and \ref{fund-1}, any $X\in\ggg$ under Ad$(G)$-conjugation lies in a fixed Borel subalgebra. We first fix a Borel subgroup $B=B_0\ltimes U$ such that the maximal torus $T$ is contained in $B_0\subset G_0$, and   $\Lie(B)=\liet+\sum_{\alpha\in \Phi(G_0,T)^+}(\ggg_0)_\alpha+\sum_{\alpha\in \Phi(U,T)}\uuu_\alpha$ (keeping in mind  (\ref{root decomp})). Without loss of generality, we might as well suppose  $X\in \Lie(B)$. Consider its Jordan-Chevally decomposition $X=X_s+X_n$. Then  $ X_s,X_n\in \Lie(B)$.
By assumption, we can write
\begin{align}\label{JC decomp}
 X=X_s+ \sum_{\alpha\in\Phi(G_0,T)^+} X_{{\alpha}} +\sum_{\beta\in \Phi(U,T)}X_\beta
 \end{align}
 where {$ X_s\in \ttt, $} $X_{{\alpha}}\in (\ggg_0)_\alpha$ for $\alpha\in \Phi(G_0,T)^+$ and $X_\beta\in \uuu_\beta$ for $\beta\in \Phi(U,T)$.

Then $\Ad(t)(X)=X_s+ \sum_{\gamma\in R^+ } {\gamma}(t) X_{\gamma}$ for all $t\in  T$,  with $R^+:=\Phi(G_0,T)^+\cup \Phi(U,T)$.  Suppose $ \chi $ is a cocharacter satisfies the positivity condition \ref{positive}. Then for all $ a\in \bbf^\times$, up to conjugation under $G$-action
	\[ \Ad(\chi(a))(X)=X_s  + \sum_{ \gamma\in R^+} a^{\langle \gamma,\chi\rangle} X_{\gamma} \]
with $\langle\gamma, \chi\rangle>0$ for all $\gamma\in R^+$, which shows that we can extend the map $ a\mt \Ad (\chi(a))X $ to a morphism $ \bbf\ra {\ggg} $ by $ 0\mt X_s$.  It follows that $ X_s\in \overline{G\cdot X}$ as claimed.
\end{proof}

\section{Chevalley Restriction Theorem}
From now on, we always assume that $G=G_0\ltimes U$ (resp. $ \lieg=\lieg_0\oplus \uuu$) is a semi-reductive  algebraic group (resp. semi-reductive  Lie algebra) over $ \ff $ of rank $ n $.


\subsection{}
Let $\cw$ be the Weyl group of $G$ relative to $T$, i.e., $\cw=N_G(T)/\text{C}_G(T)$, which can be identified with $N_{G_0}(T)/\text{C}_{G_0}(T)$. For each $ w\in \cw, $ denote by $ \dot{w} $ the presentation of $ w $ in $ N_G(T). $
Thanks to \cite[\S 7.12]{Jan} and \cite[\S 11.12]{Bor}, the following lemma holds.

\begin{lemma}\label{above lem}
	For a given maximal torus $T$, let $\lieh:=\Lie(T)$. Take $h\in \lieh$. Then	\[ \text{Ad}(G)(h)\cap \lieh =\{ \dot{w}(h)| w\in \cw \}. \]
\end{lemma}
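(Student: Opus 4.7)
The inclusion $\{\dot{w}(h)\mid w\in\mathcal{W}\}\subseteq \text{Ad}(G)(h)\cap \lieh$ is immediate: any representative $\dot{w}\in N_G(T)$ has $\text{Ad}(\dot{w})$ preserving $\lieh$, and its restriction to $\lieh$ is by definition the Weyl group element $w$. So the content of the lemma is the reverse inclusion.

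My plan is to reduce the statement to the classical result for the reductive group $G_0$ cited from \cite[\S7.12]{Jan} and \cite[\S11.12]{Bor}. The key auxiliary fact I will need is that $U$ is ``invisible modulo $\uuu$'': for every $u\in U$ and $Y\in \lieg$, one has $\text{Ad}(u)(Y)-Y\in \uuu$. For $Y\in \uuu$ this holds because $\uuu$ is $U$-stable. For $Y\in \lieg_0$, the normality of $U$ in $G$ implies that for any $g_0\in G_0$ the commutator $[u,g_0]=u g_0 u^{-1} g_0^{-1}$ lies in $U$, so $u g_0 u^{-1}\in g_0 U$; differentiating along curves in $G_0$ through the identity yields the claimed congruence on $\lieg_0$. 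Since $\lieg=\lieg_0\oplus \uuu$, the fact follows on all of $\lieg$.

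Given this auxiliary fact, suppose $g\in G$ with $\text{Ad}(g)(h)\in \lieh$. I would write $g=g_0 u$ with $g_0\in G_0$ and $u\in U$, and apply the auxiliary fact to $h\in \lieh\subset \lieg_0$ to get $\text{Ad}(u)(h)=h+Z$ for some $Z\in \uuu$. Thus
\[
\text{Ad}(g)(h)=\text{Ad}(g_0)(h)+\text{Ad}(g_0)(Z).
\]
Because the decomposition $\lieg=\lieg_0\oplus \uuu$ is $G_0$-stable (both summands being $G_0$-submodules by the semi-direct product structure), the right-hand side decomposes according to this splitting, with $\text{Ad}(g_0)(h)\in \lieg_0$ and $\text{Ad}(g_0)(Z)\in \uuu$. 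The hypothesis $\text{Ad}(g)(h)\in \lieh\subset \lieg_0$ then forces $\text{Ad}(g_0)(Z)=0$, hence $Z=0$, and $\text{Ad}(g_0)(h)=\text{Ad}(g)(h)\in \lieh$. The classical result for the reductive group $G_0$ now applies to $g_0$ and $h$, giving $\text{Ad}(g_0)(h)=\dot{w}(h)$ for some $w\in \mathcal{W}(G_0,T)$, which coincides with $\mathcal{W}$ by Lemma \ref{fund-1}(1).

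The main obstacle I anticipate is the auxiliary fact itself; everything else is routine. It is structural—holding whenever $U$ is a normal closed subgroup with Lie algebra an ideal of $\lieg$—but it needs to be formulated so as to remain valid in positive characteristic, where the exponential-series shortcut is unavailable and one must argue directly via conjugation of the group elements and differentiation, as sketched above.
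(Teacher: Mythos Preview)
Your argument is correct. The paper itself offers no proof beyond the sentence ``Thanks to \cite[\S7.12]{Jan} and \cite[\S11.12]{Bor}, the following lemma holds.'' The Borel reference covers arbitrary connected algebraic groups: if $h' = \Ad(g)(h) \in \lieh$, then $T$ and $gTg^{-1}$ are both maximal tori of the connected centralizer $C_G(h')^\circ$, hence conjugate there, which yields an element of $N_G(T)$ carrying $h$ to $h'$. That line of reasoning uses nothing about the semi-reductive structure.

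Your route is different and more concrete: you exploit the decomposition $G = G_0 \ltimes U$ to reduce directly to the reductive case for $G_0$, already on hand via \cite[\S7.12]{Jan} and Lemma~\ref{fund-1}(1). The auxiliary fact that $\Ad(u)$ is the identity on $\lieg/\uuu$ for $u \in U$ is correct; a clean characteristic-free justification, avoiding the curve heuristic you flagged, is that the quotient homomorphism $\pi \colon G \to G/U$ satisfies $d\pi \circ \Ad(g) = \Ad_{G/U}(\pi(g)) \circ d\pi$ and $\pi(u) = e$. Your approach fits the paper's setup naturally, while the Borel-style argument buys generality at the cost of invoking conjugacy of maximal tori in centralizers.
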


\begin{lemma}(\cite[Proposition 7.12]{Jan})\label{reductive che res them}
	The map $f\longmapsto f|_{\hhh}$ induces an injective homomorphism $\Phi_0:\,\ff[\lieg_0]^{G_0}\longrightarrow \ff[\lieh]^\cw$ of $\bbf$-algebras.
	This homomorphism is an isomorphism if $ \text{char}(\bbf)\neq 2 $ or if $ \text{char}(\bbf) =2 $ and $ \alpha\notin 2X(T) $ for all roots $ \alpha $ of $G_0$ relative to $T$.
\end{lemma}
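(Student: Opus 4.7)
The plan is to handle well-definedness and injectivity first and then tackle surjectivity under the characteristic hypothesis. For well-definedness, pick $f\in\ff[\lieg_0]^{G_0}$; for any $w\in\cw$ with lift $\dot w\in N_{G_0}(T)$ and any $h\in\lieh$, Lemma \ref{above lem} gives $\dot w(h)=\Ad(\dot w)h\in\Ad(G_0)(h)$, so $f(\dot w(h))=f(h)$ and $\Phi_0(f)\in\ff[\lieh]^\cw$; the algebra-homomorphism property is immediate from pullback along restriction. The first nontrivial step is injectivity, which reduces to showing that $\Ad(G_0)\cdot\lieh$ is Zariski-dense in $\lieg_0$. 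This follows from Lemma \ref{max torus} applied to $G_0$ (every semisimple element of $\lieg_0$ lies in some $G_0$-conjugate of $\lieh$) together with the existence of a regular semisimple element, supplied by applying a regular cocharacter $\chi$ as in \S\ref{reg}; the locus of regular semisimple elements is open and nonempty in $\lieg_0$, so its $G_0$-saturation is dense. Since $\lieg_0$ is irreducible, a polynomial vanishing on a dense subset is zero.

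For surjectivity, the approach is to produce, for each generator of the polynomial ring $\ff[\lieh]^\cw$, an explicit preimage in $\ff[\lieg_0]^{G_0}$. Fix a faithful rational representation $\rho:G_0\to\GL(V)$; the coefficients of the characteristic polynomial
\[
 p_X(t)\;=\;\det\bigl(t\cdot\id_V-\sfd\rho(X)\bigr)
\]
are $G_0$-invariant polynomial functions of $X\in\lieg_0$, and their restrictions to $\lieh$ are the elementary symmetric functions in the weights of $V$ with respect to $T$. For type $A$ the defining representation already produces generators of $\ff[\lieh]^\cw$; for other classical types one uses the standard embedding into $\mathfrak{gl}(V)$, and for the exceptional types one uses a minuscule or adjoint representation combined with the Chevalley--Shephard--Todd description of $\ff[\lieh]^\cw$ as a polynomial algebra on $\mathrm{rk}(G_0)$ homogeneous generators of the known degrees. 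Reduction to the semisimple case is handled by treating the center $Z(G_0)^\circ$ separately, where $\cw$ acts trivially.

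The main obstacle is the characteristic-polynomial construction in small characteristic: in characteristic $2$, if some root $\alpha$ lies in $2X(T)$, then certain pairs of weights of $\rho$ coincide after squaring, and the invariants built from $p_X(t)$ no longer separate the generators of $\ff[\lieh]^\cw$, leaving the image as a proper subring. The hypothesis $\alpha\notin 2X(T)$ for all roots is tailored to rule out exactly this degeneration, after which a type-by-type check shows that the lifts of generators assembled from characteristic polynomials of well-chosen representations span all of $\ff[\lieh]^\cw$. The hardest part of the full argument is this last combinatorial verification, carried out completely in \cite[Proposition 7.12]{Jan}; we invoke that result rather than reproduce the case analysis.
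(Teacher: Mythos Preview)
The paper does not give its own proof of this lemma: it is stated with the citation \cite[Proposition 7.12]{Jan} and no proof environment follows. Your proposal is therefore not being compared against an argument in the paper but against a bare citation; in that sense your write-up is strictly more detailed than what the paper provides, and your final move---invoking \cite[Proposition 7.12]{Jan} for the surjectivity verification---is exactly the paper's treatment.

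On the content of your sketch: the well-definedness and injectivity arguments are fine. Density of $\Ad(G_0)\cdot\lieh$ via the open locus of regular semisimple elements is a standard and correct route; note that the paper's own proof of Theorem~\ref{che res thm} (the semi-reductive analogue) uses the alternative route through Jordan decomposition and Proposition~\ref{semisimple orbit}, but for the reductive $G_0$ either works. Your surjectivity outline via characteristic polynomials of well-chosen representations is a reasonable heuristic, though it is not quite how Jantzen organizes the argument; since you explicitly defer to his proof for the case analysis, this is a matter of exposition rather than a gap.
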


Consider the canonical projection $ \on{pr}: \lieg  \ra  \lieg_0$  and the imbedding $\on{i}:\lieg_0\ra \lieg$ for a given semi-reductive Lie algebra $\ggg=\ggg_0\oplus \uuu$. Then $\on{pr}$ and $\on{i}$ are  both  morphisms of algebraic varieties.

\begin{lemma}\label{pr} The comorphism
	$ \on{pr}^{*}:\bbf[\lieg_{0}] \ra \bbf[\lieg] $ induces an injective homomorphism $ \on{pr}^{*}:\bbf[\lieg_{0}]^{G_0} \ra \bbf[\lieg]^G, $ while
	$ \on{i}^{*}:\bbf[\lieg] \ra \bbf[\lieg_{0}] $ induces a surjective homomorphism $ \on{i}^{*}:\bbf[\lieg]^G\ra \bbf[\lieg_{0}]^{G_0}. $
\end{lemma}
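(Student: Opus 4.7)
The starting observation is that $\on{pr}\circ\on{i}=\id_{\lieg_0}$ as morphisms of varieties, since $\on{pr}$ forgets the $\uuu$-component while $\on{i}$ includes $\lieg_0$ with zero $\uuu$-component. Passing to comorphisms gives $\on{i}^{*}\circ\on{pr}^{*}=\id_{\bbf[\lieg_0]}$. This already forces $\on{pr}^{*}$ to be injective and $\on{i}^{*}$ to be surjective as maps between the full polynomial algebras; so the content of the lemma reduces to checking that these maps restrict properly to the subalgebras of invariants.

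For $\on{pr}^{*}$, the plan is to first verify the equivariance identity
\[
\on{pr}(\Ad(g)X)=\Ad(\bar g)\,\on{pr}(X),\qquad g\in G,\ X\in\lieg,
\]
where $\bar g\in G_0$ denotes the image of $g$ under the quotient $G\twoheadrightarrow G/U\cong G_0$. The two key inputs are: (i) $U\triangleleft G$, so $\uuu$ is a Lie ideal of $\lieg$ and is $\Ad(G)$-stable, and in particular $\Ad(u)$ acts trivially on $\lieg/\uuu\cong\lieg_0$ for $u\in U$; (ii) $\lieg_0=\Lie(G_0)$ is $\Ad(G_0)$-stable. Writing $g=g_0u$ with $g_0\in G_0$, $u\in U$, and $X=X_0+V$ with $X_0\in\lieg_0$, $V\in\uuu$, the identity above follows by unfolding $\Ad(g_0u)(X_0+V)$ and collecting $\lieg_0$-components. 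Once this is in hand, for any $f\in\bbf[\lieg_0]^{G_0}$ the computation
\[
(\on{pr}^{*}f)(\Ad(g)X)=f(\Ad(\bar g)\on{pr}(X))=f(\on{pr}(X))=(\on{pr}^{*}f)(X)
\]
shows $\on{pr}^{*}f\in\bbf[\lieg]^{G}$.

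For $\on{i}^{*}$, the verification is cleaner: for $g_0\in G_0\subset G$ and $X_0\in\lieg_0$, we have $\on{i}(\Ad(g_0)X_0)=\Ad(g_0)\on{i}(X_0)$ because $\Ad(G_0)$ preserves $\lieg_0$ and $\on{i}$ is the inclusion. Hence for any $f\in\bbf[\lieg]^{G}$, the pullback $\on{i}^{*}f$ is automatically $G_0$-invariant. Surjectivity at the invariant level then comes for free from the section identity $\on{i}^{*}\circ\on{pr}^{*}=\id$: given $f_0\in\bbf[\lieg_0]^{G_0}$, the element $\on{pr}^{*}f_0$ lies in $\bbf[\lieg]^{G}$ by the previous paragraph, and $\on{i}^{*}(\on{pr}^{*}f_0)=f_0$.

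There is no serious obstacle here; the only point to be mildly careful about is the normality of $U$ in $G$ (equivalently, $\uuu$ being an ideal of $\lieg$), which is precisely what makes the projection $\lieg\to\lieg_0$ intertwine the $G$-action with the $G_0$-action via the quotient $G\to G_0$, and thereby carries invariants back and forth.
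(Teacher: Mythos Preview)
Your proof is correct and follows essentially the same approach as the paper's: both verify that $\on{pr}^{*}$ lands in $\bbf[\lieg]^G$ by decomposing $g=g_0u$ with $g_0\in G_0$, $u\in U$ and observing that $\Ad(g)(X_0+V)=\Ad(g_0)X_0+(\text{something in }\uuu)$, then use the section identity $\on{i}^{*}\circ\on{pr}^{*}=\id_{\bbf[\lieg_0]}$ to conclude injectivity of $\on{pr}^{*}$ and surjectivity of $\on{i}^{*}$ at the level of invariants. Your phrasing via the quotient map $G\twoheadrightarrow G/U\cong G_0$ and the equivariance identity $\on{pr}(\Ad(g)X)=\Ad(\bar g)\on{pr}(X)$ is a slightly more conceptual packaging of the same computation, but there is no substantive difference.
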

\begin{proof}
	For each $ f\in  \bbf[\lieg_{0}]^{G_0}, $ denote $ F=\on{pr}^*(f)\in \ff[\lieg]. $ Equivalently, $ F (D+u) = f(D) $, where $ D\in {\lieg}_{0},\ u\in\uuu$.
	For any $ \sigma\in G$, we have $\sigma=\sigma_1\cdot \sigma_2 $ where $ \sigma_1\in G_0 $ and $ \sigma_2\in U. $ Then $ \sigma\cdot(D+u)=\sigma_1\cdot D+\pr{u} $, where $ \pr{u}\in \mathfrak{u}. $  Moreover,
	\[(\sigma^{-1}\cdot F)(D+u)=F(\sigma (D+u))=F(\sigma_1 D+\pr{u}) =f(\sigma_1 D)=f(D)=F(D+u). \]
	Namely, $ F $ is $ G $-invariant and $ \on{pr}^{*}:\bbf[\lieg_{0}]^{G_0} \ra \bbf[\lieg]^G $ is well-defined.
	
	Since $ \on{i}^* $ is the restriction map, it is easy to verify that $ \on{i}^{*}:\bbf[\lieg]^G\ra \bbf[\lieg_{0}]^{G_0} $ is well-defined.
	
	Note that $ \on{pr}\circ \on{i}$ is the identity map on $\lieg_0, $. Therefore, $ \on{i}^*\circ\on{pr}^*$ is the identity map on $ \ff[\lieg_0]^{G_0}. $ As a corollary, $ \on{pr}^* $ is injective and $ \on{i}^* $ is surjective.
\end{proof}
It follows from \cite{Che} that $ \ff[\lieh]^\cw $ is generated by $ n $ algebraically independent homogeneous elements (and the unit). Then $ \ff[\lieg_0] ^{G_0} $ is a polynomial algebra with $ n $ algebraically independent homogeneous polynomials
$\{f_i\mid 1\leq i\leq  n\} \subset \bbf[\lieg_{0}]$.
Denote $ F_i:= \on{pr}^{*}(f_i) $ for all $ i=1,\cdots,n. $ Then all $ F_i $ are homogeneous and algebraically independent, and $ \on{pr}^*(\ff[\lieg_0] ^{G_0}) = \ff[F_1,\cdots,F_n]. $

\begin{theorem}\label{che res thm}
	Let $G$ be a semi-reductive  algebraic group satisfying the positivity condition \ref{positive}.  Then the map $f\longmapsto f|_{\hhh}$ induces an injective homomorphism $\Phi:\,\ff[\lieg]^G\longrightarrow \ff[\lieh]^\cw$ of $\bbf$-algebras.
	This homomorphism is an isomorphism if $ \text{char}(\bbf)\neq 2 $ or if $ \text{char}(\bbf) =2 $ and $ \alpha\notin 2X(T) $ for all roots $ \alpha $ of $G_0$ relative to $T$.
\end{theorem}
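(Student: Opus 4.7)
The plan is to establish injectivity (which requires the positivity condition) and surjectivity (which requires the characteristic hypotheses) independently, by factoring the restriction map $\Phi$ through the reductive Chevalley map $\Phi_0$ of Lemma \ref{reductive che res them} via the projection $\on{pr}: \lieg \to \lieg_0$. First, I would note that $\Phi$ is well-defined: since $\cw = N_G(T)/C_G(T)$ acts on $\lieh$ via the adjoint action of representatives in $N_G(T) \subseteq G$, any $G$-invariant polynomial restricts to a $\cw$-invariant polynomial on $\lieh$.

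For injectivity, the key input is Proposition \ref{semisimple orbit}: for every $X \in \lieg$ with Jordan--Chevalley decomposition $X = X_s + X_n$, one has $X_s \in \overline{\co_X}$. Suppose $f \in \ff[\lieg]^G$ satisfies $f|_{\lieh} = 0$. Since $f$ is constant on the $G$-orbit $\co_X$ with value $f(X)$ and is Zariski-continuous, $f$ is constant with value $f(X)$ on the closure $\overline{\co_X}$ as well, so $f(X) = f(X_s)$. By Lemma \ref{max torus} every semisimple element lies in a maximal torus of $\lieg$, and all such tori are $\Ad(G)$-conjugate to $\lieh$; hence $X_s = \Ad(g) h$ for some $g \in G$ and $h \in \lieh$, giving $f(X) = f(X_s) = f(h) = 0$. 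Thus $f = 0$ and injectivity holds.

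For surjectivity under the stated characteristic conditions, I would exploit the commutative triangle $\Phi \circ \on{pr}^* = \Phi_0$: for any $f \in \ff[\lieg_0]^{G_0}$ and $h \in \lieh \subseteq \lieg_0$, one has $\on{pr}^*(f)(h) = f(\on{pr}(h)) = f(h)$, so $\Phi(\on{pr}^*(f)) = f|_{\lieh} = \Phi_0(f)$. Under the hypotheses on $\on{char}(\ff)$, Lemma \ref{reductive che res them} makes $\Phi_0$ surjective, hence $\Phi$ is surjective as well. The main obstacle is the injectivity step, which depends essentially on the positivity condition through Proposition \ref{semisimple orbit}: without a cocharacter $\chi$ of $T$ satisfying $\langle \alpha, \chi \rangle > 0$ for all $\alpha \in \Phi(G_0,T)^+ \cup \Phi(U,T)$, one cannot degenerate a general element of $\lieg$ to its semisimple part along a one-parameter subgroup, and injectivity of the restriction may genuinely fail.
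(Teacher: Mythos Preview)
Your proof is correct and takes essentially the same approach as the paper: injectivity via Proposition \ref{semisimple orbit} and Lemma \ref{max torus}, and surjectivity by factoring through the reductive Chevalley map $\Phi_0$. The paper phrases the surjectivity step as $\Phi = \Phi_0 \circ \on{i}^*$ with $\on{i}^*$ surjective by Lemma \ref{pr}, which is equivalent to your triangle $\Phi \circ \on{pr}^* = \Phi_0$; the only small omission is that you should cite Lemma \ref{pr} to guarantee $\on{pr}^*(f) \in \ff[\lieg]^G$ before applying $\Phi$.
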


\begin{proof}
	Note that $ \Phi=\Phi_0\circ  \on{i}^*. $ For each $ X\in\ggg, $ let $ X=X_s+X_n $ be the Jordan decomposition. It follows from Proposition \ref{semisimple orbit} that
	$f(X)=f(X_s)$ for all  $f\in \ff[\lieg]^G.$
	By Lemma \ref{max torus}, there is $ h\in \lieh $ such that $ X_s\in G\cdot h. $
	By Lemma \ref{above lem}, $G\cdot h\cap\hhh=\{\dot{w}(h)\mid w\in \cw\}$ for all $h\in\hhh$.
	It follows that each $f\in \ff[\lieg]^G$ is determined by $f|_{\hhh}$, which is invariant under the action of $\cw$.  This implies that $\Phi$ is injective.
	
	On the other hand, by Lemmas \ref{reductive che res them} and \ref{pr}, $ \Phi $ is surjective if $ \text{char}(\bbf)\neq 2 $ or if $ \text{char}(\bbf) =2 $ and $ \alpha\notin 2X(T) $ for all roots $ \alpha $ of $G_0$ relative to $T$.
\end{proof}

\begin{corollary}\label{2.5}
	Suppose $ G $ is a semi-reductive algebraic group of  rank $n$  satisfying the positivity condition \ref{positive}, and $ \text{char}(\bbf)\neq 2 $ or if $ \text{char}(\bbf) =2 $ and $ \alpha\notin 2X(T) $ for all roots $ \alpha $ of $G_0$ relative to $T$.  Then $ \ff[\lieg]^G $ is a polynomial algebra generated by $n$ algebraically independent homogeneous polynomials.
\end{corollary}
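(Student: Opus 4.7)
The plan is to combine Theorem \ref{che res thm} with the classical Chevalley theorem and with the explicit invariants $F_i=\on{pr}^{*}(f_i)$ that were already constructed in the paragraph preceding Theorem \ref{che res thm}. The corollary is essentially a repackaging of the theorem, but with an eye on producing a \emph{concrete} set of algebraically independent homogeneous generators, rather than only an abstract algebra isomorphism. The main ingredients are already in place, so no new geometric input is needed; what remains is to check compatibility between three maps ($\on{pr}^{*}$, $\on{i}^{*}$, and restriction to $\hhh$) and to ensure that the isomorphism in Theorem \ref{che res thm} carries generators to generators.

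First I would recall that by Chevalley (\cite{Che}), $\bbf[\hhh]^{\cw}$ is a polynomial algebra in $n$ algebraically independent homogeneous generators. Under the standing hypotheses on $\on{ch}(\bbf)$ and on the roots of $G_0$, Lemma \ref{reductive che res them} upgrades this to an isomorphism of graded algebras $\Phi_0:\bbf[\ggg_0]^{G_0}\xrightarrow{\sim}\bbf[\hhh]^{\cw}$. Hence we may fix algebraically independent homogeneous generators $f_1,\ldots,f_n$ of $\bbf[\ggg_0]^{G_0}$ and set $F_i:=\on{pr}^{*}(f_i)$. By Lemma \ref{pr}, each $F_i$ lies in $\bbf[\ggg]^{G}$; they are homogeneous because $\on{pr}:\ggg\to\ggg_0$ is linear (so $\on{pr}^{*}$ is degree-preserving); and they are algebraically independent because $\on{pr}^{*}$ is injective.

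Next I would verify that the $F_i$ actually generate $\bbf[\ggg]^{G}$. For this, note the factorization $\Phi=\Phi_0\circ\on{i}^{*}$ used in the proof of Theorem \ref{che res thm}, and observe that $\on{i}^{*}\circ\on{pr}^{*}=\id$ on $\bbf[\ggg_0]^{G_0}$ (because $\on{pr}\circ\on{i}=\id_{\ggg_0}$). Therefore
\[
\Phi(F_i)=\Phi_0(\on{i}^{*}(\on{pr}^{*}(f_i)))=\Phi_0(f_i),
\]
and the right-hand side ranges over a set of algebraically independent homogeneous generators of $\bbf[\hhh]^{\cw}$. Since Theorem \ref{che res thm} provides that $\Phi$ is an isomorphism of $\bbf$-algebras under the present hypotheses, pulling back along $\Phi^{-1}$ yields $\bbf[\ggg]^{G}=\bbf[F_1,\ldots,F_n]$, which is the asserted polynomial algebra.

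I do not anticipate a real obstacle here: Theorem \ref{che res thm} (which in turn rests on the positivity-based semisimple-orbit closure result Proposition \ref{semisimple orbit}) carries the entire load. The only points requiring care are bookkeeping ones, namely confirming that all three maps $\on{pr}^{*}$, $\on{i}^{*}$, and the restriction $f\mapsto f|_{\hhh}$ preserve the natural polynomial grading, so that the resulting isomorphism is graded and therefore the preimages $F_i$ of homogeneous generators are themselves homogeneous of the same degree.
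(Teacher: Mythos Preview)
Your proposal is correct and follows essentially the same approach as the paper: the corollary is immediate from Theorem \ref{che res thm} together with Chevalley's theorem \cite{Che} on $\bbf[\hhh]^{\cw}$, and the paper had already constructed the explicit generators $F_i=\on{pr}^{*}(f_i)$ in the paragraph preceding Theorem \ref{che res thm}. In fact the paper does not give a separate proof of the corollary at all, so your explicit verification that $\Phi(F_i)=\Phi_0(f_i)$ and hence that the $F_i$ generate is a welcome elaboration of what the paper leaves implicit.
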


\subsection{Eamples} Theorem \ref{che res thm} entails the following good properties for those important examples we are interested and presented before.

\begin{corollary}\label{2.6}
	Let $\ggg$ be any one from the following list:
	\begin{itemize}
		\item[(1)] the Lie algebra of an semi-reductive algebraic group $G=\GL(V) \times_\nu V $ over $\bbf$ where $\nu$ is the natural representation of $\GL(V)$ (see Example \ref{lem pst ex}).
		\item[(3)] the Lie algebra of a parabolic subgroup $ P $ of a reductive algebraic group over $\bbf$ with decomposition $ P=L\ltimes U $ such that the unipotent radical $ U $ contains no trivial weight space as an $ L $ module (see Example \ref{lem pst ex2}).
\item[(3)] $\Lie(G)$ associated with $W(n)$  and $S(n)$ over $\bbf$ (see Example \ref{lem pst ex3}).	

\end{itemize}
Then the invariants of $\ff[\lieg]$ under $G$-action must be a polynomial ring. More precisely,
$ \ff[\lieg]^G\cong \ff[\lieh]^\cw$ as $\bbf$-algebras, where $\cw$ is the Weyl group of $\ggg$. In particular, $\cw$ coincides with the $n$-th symmetry group $\frak{S}_n$ in Case (1) and Case (3).
\end{corollary}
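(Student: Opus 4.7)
The plan is to apply Theorem~\ref{che res thm} and Corollary~\ref{2.5} directly to each of the three families. This reduces everything to verifying, in each case, that (i) the group is semi-reductive (which is immediate by construction), (ii) the positivity condition of Convention~\ref{positive} holds for a suitable cocharacter $\chi \in X_*(T)$, and (iii) either $\text{char}(\ff) \neq 2$, or no root of $G_0$ relative to $T$ lies in $2X(T)$. Once (i)--(iii) are in place, Theorem~\ref{che res thm} gives the isomorphism $\ff[\lieg]^G \cong \ff[\lieh]^\cw$ and Corollary~\ref{2.5} upgrades it to the polynomial-algebra statement. The identification $\cw \cong \mathfrak{S}_n$ in Cases (1) and (3) is then a separate observation about the reductive part.

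For Cases (1) and (3) essentially everything is already in hand. The positivity condition was verified in Examples~\ref{lem pst ex} and~\ref{lem pst ex3} respectively, by an explicit regular cocharacter coming from a tuple $\sum_i c_i \varepsilon_i$ with strictly decreasing positive coefficients. In both cases the reductive part is isomorphic to $\GL(n, \ff)$, whose Weyl group relative to the diagonal torus is $\mathfrak{S}_n$, and whose roots $\varepsilon_i - \varepsilon_j$ never lie in $2X(T)$, so condition (iii) holds regardless of $\text{char}(\ff)$.

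For Case (2) I would work inside the ambient reductive group $G'$ of which $P = L \ltimes U$ is a parabolic, with a maximal torus $T \subset L$. The $T$-weights occurring on $\Lie(U)$ form a subset of the root system $\Phi(G', T)$, and the hypothesis that $\Lie(U)$ has no trivial $L$-weight space forces every such weight to be a genuine nonzero root of $G'$. Choosing a regular cocharacter $\chi \in X_*(T)$ with $\langle \alpha, \chi \rangle > 0$ for every positive root of $G'$, as in Example~\ref{lem pst ex2}, then secures $\langle \alpha, \chi \rangle > 0$ for every $\alpha \in \Phi(G_0, T)^+ \cup \Phi(U, T)$, so Convention~\ref{positive} is satisfied. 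Condition (iii) is inherited verbatim from $G'$. The Weyl group in this case is that of the Levi $L$, which in general is only a proper subgroup of the Weyl group of $G'$ and need not be symmetric, explaining why the $\mathfrak{S}_n$ identification is not asserted here.

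The step requiring the most care is the verification of the positivity condition in Case (2): the module-theoretic hypothesis on $\Lie(U)$ is tailored precisely to rule out a trivial $T$-weight that would obstruct the construction of $\chi$, and it is what allows the general parabolic case to fit cleanly into the framework of Theorem~\ref{che res thm}. Everything else is a direct citation of results already established.
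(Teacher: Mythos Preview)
Your proposal is correct and follows exactly the paper's approach: the corollary is stated as an immediate consequence of Theorem~\ref{che res thm} and Corollary~\ref{2.5}, with the positivity condition supplied by Examples~\ref{lem pst ex}, \ref{lem pst ex2}, and \ref{lem pst ex3}, and the Weyl-group identification read off from $G_0\cong\GL(n)$ in Cases~(1) and~(3). One small remark: in Case~(2) you treat the hypothesis ``$U$ contains no trivial weight space'' as essential for positivity, but for a parabolic subgroup of a reductive group the $T$-weights on $\Lie(U)$ are always nonzero roots of the ambient group, so this condition is automatic; your argument is not wrong, just more cautious than necessary. You are also more explicit than the paper about the characteristic-2 clause in Case~(2), which the paper does not address.
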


\section{Application I: Comparison with the center of enveloping algebra of the enhanced reductive Lie algebra  $\underline{\gl(V)}$}\label{4}
Suppose $ \on{ch}(\ff)=0 $ in this section. Let $G=\GL(V) \times_\nu V $ be the enhanced general linear group as in Corollary \ref{2.6}(1). Then $\lieg=\mathfrak{gl}(n)\ds V$.

\subsection{} For each $ x\in \mathfrak{gl}(n)$, we can write
	$ \det(t+x)=t^n+\sum_{i=0}^{n-1}\psi_i(x)t^i$. Then by the classical Chevalley restriction theorem   (see \cite[Proposition 7.9]{Jan})
	$$  \ff[\mathfrak{gl}(n)]^{\GL(n)}=\ff[\psi_0,\cdots,\psi_{n-1}].  $$
		For each $ 0\leq i\leq n-1, $  let $ \varphi_i:=\on{pr}^*(\psi_i) $. Namely,  $ \varphi_i(g):=\psi_i(x) $ for all $ g\in \lieg $ with $g=x+v$, $x\in \gl(n)$, $v\in V$. By Theorem \ref{che res thm}, 
	\[ S(\lieg^*)^{\lieg}=S(\lieg^*)^G=\ff[\lieg]^{G}= \ff[\varphi_0,\cdots,\varphi_{n-1}]. \]
	
	\subsection{}
	 Let $\mathfrak{p}$ be a parabolic subalgebra of maximal dimension in $\mathfrak{sl}(n+1)$ consisting of $(n+1)\times(n+1)$ zero-traced matrices with $(n+1,k)$-entries equal to zero  for any $ k=1,\ldots,n$.
	
	Consider two elements $ I=(\sum_{i=1}^{n} E_{ii},0)\in\ggg$ and $ H=\frac{1}{n+1}\sum_{i=1}^{n} (E_{ii}-E_{n+1,n+1})\in\liep$. Then  $ \lieg=\ff I\ds \pr{\lieg} $ and $ \liep=\ff H\ds \pr{\liep}$,  and $ \pr{\lieg}\iso \pr{\liep}\iso \mathfrak{sl}(n)\ds M$.   Moreover, one can check that there exists an isomorphism of Lie algebras: $ \lieg\iso \liep $,  sending $I$ to $H$.
	
	By \cite[Section 7]{Jo}, $ S(\liep)^{\pr{\liep}}=\ff[z] $ is a polynomial ring with one indeterminate where $ z $ is a harmonic element in $  \mathfrak{sl}(n+1). $ As in the proof of \cite[Lemma 7.4]{Jo},
$[H,z]=nz$. Therefore, we have
\begin{align}\label{key eq}
 S(\lieg)^{{\lieg}}\iso S(\liep)^{{\liep}}=\ff[z]^H=\ff.
\end{align}

We finally obtain the structure of the  center of $U(\ggg)$ for $\ggg=\gl(V)\ltimes V$.
\begin{prop} Let $\lieg=\gl(n)\oplus V$. Then the center of $U(\lieg)$ is  one-dimensional.
\end{prop}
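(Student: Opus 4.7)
The plan is to deduce the one-dimensionality of $Z(U(\lieg))$ from the computation of the symmetric-algebra invariants $S(\lieg)^{\lieg}$ already recorded in equation (\ref{key eq}). The passage from symmetric invariants to the center of the enveloping algebra is the classical one via the symmetrization map, which is available here because $\on{ch}(\ff) = 0$.

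Concretely, I would proceed as follows. Since $\on{ch}(\ff) = 0$, the symmetrization map
$$\beta: S(\lieg) \longrightarrow U(\lieg), \qquad \beta(x_1 \cdots x_k) \;=\; \frac{1}{k!}\sum_{\sigma \in \mathfrak{S}_k} x_{\sigma(1)} \cdots x_{\sigma(k)},$$
is a linear isomorphism that intertwines the adjoint $\lieg$-actions on domain and codomain; this is a standard consequence of the Poincar\'e--Birkhoff--Witt theorem in characteristic zero. Restricting to invariants, $\beta$ induces a vector-space isomorphism
$$ S(\lieg)^{\lieg} \;\xrightarrow{\;\sim\;}\; U(\lieg)^{\lieg} \;=\; Z(U(\lieg)). $$
Combined with (\ref{key eq}), which identifies $S(\lieg)^{\lieg}$ with $\ff$, this immediately yields $Z(U(\lieg)) = \ff \cdot 1$, as claimed.

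There is no serious obstacle to the argument; the heavy lifting has already been carried out in (\ref{key eq}) by means of the Lie-algebra isomorphism $\lieg \cong \liep$ with a maximal parabolic subalgebra of $\sssl(n+1)$ and Joseph's description of $S(\liep)^{\pr{\liep}}$. The only point worth remarking on is that the chain of isomorphisms $S(\lieg)^{\lieg} \cong S(\liep)^{\liep} \cong \ff[z]^{H} \cong \ff$ appearing in (\ref{key eq}) is induced by a Lie-algebra isomorphism, and therefore the identification $S(\lieg)^{\lieg} \cong Z(U(\lieg))$ furnished by $\beta$ is automatically compatible with it. Hence no additional bookkeeping is needed, and the proof reduces to a short invocation of symmetrization together with (\ref{key eq}).
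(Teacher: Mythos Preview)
Your argument is correct and follows essentially the same route as the paper: both reduce to equation (\ref{key eq}) and then pass from $S(\lieg)^{\lieg}$ to $U(\lieg)^{\lieg}=Z(U(\lieg))$. The paper invokes Duflo's theorem for this passage, whereas you use the more elementary fact that the PBW symmetrization map is a $\lieg$-module isomorphism in characteristic zero; since only the dimension of the center is at stake, your version is entirely adequate (indeed slightly more economical, as the full algebra-isomorphism content of Duflo is not needed).
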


\begin{proof} It follows from Duflo's theorem \cite{Duf} and \eqref{key eq} that
 $Z(\lieg)=U(\lieg)^{\lieg}\iso S(\lieg)^{\lieg}\iso\ff $. 
\end{proof}

\section{Application II: Nilpotent cone and Steinberg map for semi-reductive Lie algebras}
 Denote by $ \she{N} $ (resp. $ \she{N}_0 $) the variety  of all nilpotent elements of $ \lieg $ (resp. $ \lieg_0 $). Let $ \ff[\lieg]_+^G $ (resp. $ \ff[\lieg_0]_+^{G_0} $) be the subalgebra in $ \ff[\lieg]^G $ (resp. $ \ff[\lieg_0]^{G_0} $) consisting of all polynomials in $ \ff[\lieg]^G $ (resp. $ \ff[\lieg_0]_+^{G_0} $) with zero constant term. It is well-known that $ \nc_0$ coincides the common zeros of all polynomials $ f\in \ff[\lieg_0]_+^{G_0} $ (see \cite[Lemma 6.1]{Jan}).

 \subsection{}
 Suppose $G$ satisfies the positivity condition \ref{positive} and the assumption on ch$\bbf$  in Corollary \ref{2.5}. By this corollary, there exist  $n$ algebraically independent homogeneous polynomials $F_i\in \ff[\lieg]$, $i=1,\ldots,n$ such that $\ff[\lieg]^G =\ff[F_1,\cdots, F_n]$, of  which the precise meaning can be seen in Theorem \ref{che res thm}. Furthermore, we have the following lemma.

 \begin{lemma}\label{nil cone 2} Let $G$ be a semi-reductive algebraic group over $\bbf$ satisfying the positivity  condition \ref{positive} and the assumption on ch$\bbf$  in Corollary \ref{2.5}.
 Then
 	\[ \nc=\left\{ X\in \lieg\mid f(X)=0 \text{ for all } f\in \ff[\lieg]_+^G\right\} = V(F_1,\ldots,F_n). \]

 Here and further, the notation $V(F_1,\ldots,F_n)$ stands for the algebraic set of common zeros of $F_1,\ldots,F_n$.
 \end{lemma}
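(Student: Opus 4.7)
The plan is to first dispose of the easy equality
$\{X\in\lieg : f(X)=0 \text{ for all } f\in\ff[\lieg]_+^G\} = V(F_1,\ldots,F_n)$, and then establish $\nc=V(F_1,\ldots,F_n)$ by double inclusion. The first equality is immediate from Corollary \ref{2.5}: since $\ff[\lieg]^G=\ff[F_1,\ldots,F_n]$ with each $F_i$ homogeneous of positive degree, every $f\in\ff[\lieg]_+^G$ is a polynomial in the $F_i$ without constant term, so it vanishes on $V(F_1,\ldots,F_n)$; the converse is trivial since $F_i\in\ff[\lieg]_+^G$ already.

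For the inclusion $\nc\subseteq V(F_1,\ldots,F_n)$, take $X\in\nc$. Proposition \ref{semisimple orbit} supplies $0\in\overline{\co_X}$. Each $F_i$ is $G$-invariant and therefore constant on the orbit $\co_X$, and by continuity constant on its closure; evaluating at both $X$ and $0$ and using that $F_i$ is homogeneous of positive degree yields $F_i(X)=F_i(0)=0$.

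For the reverse inclusion $V(F_1,\ldots,F_n)\subseteq\nc$, let $X\in V(F_1,\ldots,F_n)$ with Jordan decomposition $X=X_s+X_n$. Proposition \ref{semisimple orbit} gives $X_s\in\overline{\co_X}$, so the same continuity argument forces $F_i(X_s)=F_i(X)=0$ for all $i$. By Lemma \ref{max torus} I may conjugate by $G$ and assume $X_s\in\lieh\subset\lieg_0$ without affecting the vanishing of $G$-invariants. Since $F_i=\on{pr}^*(f_i)$ and $\on{pr}$ restricts to the identity on $\lieg_0$, this gives $f_i(X_s)=0$ for each $i$. The classical characterization of the nilpotent cone in the reductive case (\cite[Lemma 6.1]{Jan} applied to $G_0$) asserts $\nc_0=V(f_1,\ldots,f_n)$, so $X_s\in\nc_0$; but $X_s\in\lieh$ is also semisimple, forcing $X_s=0$ and hence $X=X_n\in\nc$.

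The substantive input is entirely concentrated in two previously established results: Proposition \ref{semisimple orbit}, which depends crucially on the positivity condition to slide every orbit closure onto its semisimple part, and Corollary \ref{2.5}, which guarantees that $\ff[\lieg]^G$ is generated by homogeneous polynomials pulled back from $\ff[\lieg_0]^{G_0}$. Given these ingredients, no genuine obstruction arises; the only delicate point is to remember the compatibility $F_i|_{\lieg_0}=f_i$ when transferring vanishing from $\lieg$ back to the reductive part in the second inclusion.
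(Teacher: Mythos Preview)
Your proof is correct, and for the inclusion $\nc\subseteq V(F_1,\ldots,F_n)$ it coincides with the paper's argument (Proposition \ref{semisimple orbit} plus continuity of invariants). The equality between the zero set of $\ff[\lieg]_+^G$ and $V(F_1,\ldots,F_n)$ is handled the same way as well, via Corollary \ref{2.5}.

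The one genuine difference is in the reverse inclusion $V(F_1,\ldots,F_n)\subseteq\nc$. The paper does not use the Jordan decomposition or Proposition \ref{semisimple orbit} here; instead it observes that Jantzen's argument from \cite[\S6.1]{Jan} goes through verbatim for an arbitrary algebraic Lie algebra: the coefficients of the characteristic polynomial of $X$ acting in a faithful rational $G$-module are $G$-invariant with zero constant term, so if all of $\ff[\lieg]_+^G$ vanishes on $X$ then $X$ acts nilpotently and hence is nilpotent. This is quicker and does not invoke the positivity condition a second time. Your route---pushing the vanishing down to $X_s$, conjugating into $\lieh$, and then quoting the reductive case of \cite[Lemma 6.1]{Jan} to force $X_s=0$---is a legitimate alternative that stays entirely within the machinery already built in the paper (Proposition \ref{semisimple orbit}, Lemma \ref{max torus}, and $F_i=\on{pr}^*(f_i)$). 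The cost is a slightly longer argument and a second appeal to the positivity hypothesis; the benefit is that you never leave the semi-reductive framework to invoke a general fact about algebraic Lie algebras. Either way the lemma is established.
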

 \begin{proof}
 	Similar to \cite[Section 6.1]{Jan}, $ \nc\supseteq \left\{ X\in \lieg\mid f(X)=0 \text{ for all } f\in \ff[\lieg]_+^G\right\}. $ Actually Jantzen
 	only deal with the nilpotent cone for reductive Lie algebras in \cite{Jan}, but his	arguments remain valid for a general algebraic Lie algebra.
 	
 	If $ x\in \lieg $ is nilpotent, $ 0\in \overline{\co_X} $ by Proposition \ref{semisimple orbit}. For each $ F\in \ff[\lieg]_+^G, $ since $ F $ is continuous  and constant in $ \co_X, $ $ F $ is constant in $ \overline{\co_X}. $ Since $ F(0)=0, $ $ F(x)=0. $
 	
 	Thanks to \cite[Section 6.1]{Jan} and the definition of $ F_i, $
 	\[ \nc=V(F_1,\cdots,F_n)=V(f_1,\cdots,f_n)\times \uuu=\nc_0\times \uuu.\qedhere \]
 \end{proof}

So we have
\begin{prop}\label{nil cone 3} Keep the notations and assumption as in Lemma \ref{nil cone 2}. In particular,  $ G=G_0 \ltimes U$ denotes  a connected semi-reductive algebraic group over $ \ff $ satisfying the positivity condition \ref{positive}, and  $\lieg=\on{Lie}(G)$. The following statements hold.
	\begin{enumerate}
		\item $ \nc\iso \nc_0\times \uuu \text{ as varieties.} $
	\item For any $D\in \ggg$ with $D=x+y$, $\ x\in\lieg_0,\ y\in\uuu$, we have the following decomposition of tangent spaces
		\[ T_D(\nc) = T_x(\nc_0) \ds \uuu \]
		where $ \uuu $ is regarded as its tangent space. 
		\item Let $ \nc_{\text{sm}} $ $ \on{(} resp.\  \nc_{0,\text{sm}} \on{)}$  be the smooth locus of $ \nc $ $ \on{(} resp.\  \nc_0 \on{)}$. We have following isomorphism of varieties:
		\begin{align}\label{Iso Sm} \nc_{\text{sm}} \iso \nc_{0,\text{sm}}\times \uuu.
		\end{align}
		\item $ \nc $ is irreducible and normal.
	\end{enumerate}
\end{prop}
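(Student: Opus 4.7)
The plan is to leverage the product decomposition $\nc = \nc_0 \times \uuu$ already secured in the proof of Lemma \ref{nil cone 2}, and then to transfer classical facts about the nilpotent cone of a connected reductive algebraic group to $\nc$ via this product structure.

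For part (1), I would simply observe that the proof of Lemma \ref{nil cone 2} already establishes this: each generator $F_i=\on{pr}^*(f_i)$ of $\ff[\lieg]^G$ factors through the projection $\on{pr}:\lieg\ra\lieg_0$, so $V(F_i)=V(f_i)\times \uuu$, and intersecting over $i=1,\ldots,n$ yields $\nc=\nc_0\times \uuu$ as a closed subvariety of $\lieg=\lieg_0\ds\uuu$. For part (2), the identification $\nc\iso \nc_0\times \uuu$ and the fact that $\uuu$ is a linear space (so its tangent space at any point is canonically $\uuu$ itself) give the product tangent space formula $T_D(\nc)=T_x(\nc_0)\ds T_y(\uuu)=T_x(\nc_0)\ds\uuu$ for $D=x+y$. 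For part (3), since $\uuu$ is isomorphic to affine space and hence smooth everywhere, the smooth locus of the product $\nc_0\times \uuu$ equals $\nc_{0,\sesi\text{m}}\times\uuu$; more concretely, from (2) one sees $\dim T_D(\nc)=\dim T_x(\nc_0)+\dim\uuu$, so $D$ is a smooth point of $\nc$ iff $x$ is a smooth point of $\nc_0$, yielding \eqref{Iso Sm}.

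For part (4), the key ingredients are the classical results that $\nc_0$ is irreducible and normal for a connected reductive group $G_0$ (see, e.g., Kostant in characteristic zero, and Veldkamp/Demazure or \cite[\S8.5]{Jan} in good characteristic). Granting these, irreducibility of $\nc$ follows at once from the fact that a product of irreducible varieties over an algebraically closed field is irreducible, combined with the irreducibility of the affine space $\uuu$. Normality of $\nc$ likewise follows because the product of two normal varieties over $\ff$ is normal (the tensor product of two normal, Noetherian $\ff$-algebras is normal; equivalently, $R\otimes_{\ff}\ff[\uuu]$ is a polynomial ring extension of $R=\ff[\nc_0]$, which preserves normality).

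The only non-formal step is the appeal, in part (4), to the irreducibility and normality of $\nc_0$ under the hypotheses on $\on{char}(\ff)$ imposed in Corollary \ref{2.5}; I expect this to be the main point requiring care, since one must confirm that the classical proofs (which rely on Kostant's regular-nilpotent slice and the Chevalley restriction theorem for $G_0$) remain valid in the range of characteristics at hand. Everything else is a formal consequence of the product decomposition $\nc\iso\nc_0\times\uuu$ established in Lemma \ref{nil cone 2}.
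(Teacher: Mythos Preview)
Your proposal is correct and follows essentially the same approach as the paper: the paper derives (1) from the proof of Lemma \ref{nil cone 2}, obtains (2) as a direct consequence of (1), deduces (3) from (1) and (2) via the dimension comparison $\dim T_X(\nc)=\dim T_{\on{pr}(X)}(\nc_0)+\dim\uuu$, and obtains (4) from the classical irreducibility and normality of $\nc_0$ together with the product decomposition. Your added remark that part (4) ultimately rests on the validity of the normality of $\nc_0$ in the relevant characteristics is a fair observation, though the paper simply takes this as known input.
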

\begin{proof}
(1) folllows from the proof of Lemma \ref{nil cone 2}.

(2)  It is a direct consequence of  (1).
	
(3) Thanks to (1) and (2), $\dim(\nc) = \text{dim}(\nc_0) + \text{dim}\,\uuu $ and
	$$ \dim T_X(\nc) =\dim T_{\on{pr}(X)}(\nc_0) + \text{dim}\,\uuu,\,\,\forall\,  X\in \nc.$$
	Hence, $ X\in \nc $ is smooth if and only if $ \on{pr}_0(X)\in \nc_{\text{0}}$ is smooth.
	As a result, the restriction of $ \alpha $  to  $ \nc_{\text{sm}} $ gives rise to  the isomorphism   (\ref{Iso Sm}).
	
(4) Since $ \nc_0 $ is irreducible and normal, so is $\nc$ by (1).
\end{proof}

\subsection{The Steinberg map}
Define canonically an adjoint quotient map
\begin{align*}
\chi:  \lieg &\rightarrow \bb{A}^n, \cr
 D&\mapsto (F_1(D),\cdots, F_n(D)),\quad\forall\,D\in\lieg
\end{align*}
which we call the Steinberg map for $\ggg$. Let
\begin{align*}
\eta:  \lieg_{0} &\rightarrow \bb{A}^n, \cr
 x&\mapsto (f_1(x), \cdots,f_n(x)),\quad\forall\,x\in\lieg_0
\end{align*}
be the Steinberg map for $ \lieg_{0} $.  Since $ F_i=\on{pr}^*(f_i) $, we have $ \chi=\eta\circ \on{pr}. $

\begin{prop}\label{5.3}
	 Keeping the notations and assumptions as in Corollary \ref{nil cone 2}. In particular,  $G=G_0\ltimes U$ be a connected semi-reductive  algebraic group with $\lieg=\Lie(G)=\lieg_0\oplus \uuu$ satisfying the positivity condition.
	\begin{enumerate}
		\item $ \chi^{-1}(0)=\nc_0+\uuu. $ In particular, $ \chi^{-1}(0)=\nc $. 
		\item $ \chi^{-1}(a)=\eta^{-1}(a)\times \uuu$ for all $ a\in \chi(\lieg). $ In particular, $ \chi^{-1}(a) $ is irreducible of dimension $ \on{dim}(\lieg)-n. $
		\item For every $ a\in \chi(\lieg), $ $ \chi^{-1}(a) $ contains exactly one orbit consisting of semisimple elements.
	\end{enumerate}
\end{prop}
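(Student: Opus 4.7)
The plan is to exploit the factorization $\chi = \eta \circ \on{pr}$ throughout, reducing each assertion to the corresponding (already known) fact for the reductive Steinberg map $\eta$ plus a transverse contribution from $\uuu$.

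For (1), I would first note that $\chi^{-1}(0) = \on{pr}^{-1}(\eta^{-1}(0))$ because $\chi = \eta \circ \on{pr}$. By the classical reductive theory (as quoted in the proof of Lemma \ref{nil cone 2} from \cite[Lemma 6.1]{Jan}), $\eta^{-1}(0) = \nc_0$, so $\chi^{-1}(0) = \on{pr}^{-1}(\nc_0) = \nc_0 + \uuu$. Combined with the identification $\nc \cong \nc_0 \times \uuu$ from Proposition \ref{nil cone 3}(1), the second assertion of (1) is immediate.

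For (2), the same factorization gives $\chi^{-1}(a) = \on{pr}^{-1}(\eta^{-1}(a)) = \eta^{-1}(a) + \uuu$, which as a variety is $\eta^{-1}(a) \times \uuu$ under the decomposition $\lieg = \lieg_0 \oplus \uuu$. Irreducibility and the dimension count then reduce to the corresponding classical facts: $\eta^{-1}(a)$ is irreducible of dimension $\dim \lieg_0 - n$ (see \cite[\S7.13]{Jan}), and $\uuu$ is an affine space, so the product is irreducible of dimension $\dim \lieg_0 - n + \dim \uuu = \dim \lieg - n$.

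For (3), the existence of a semisimple orbit in $\chi^{-1}(a)$ comes from Proposition \ref{semisimple orbit}: for any $X \in \chi^{-1}(a)$ with Jordan decomposition $X = X_s + X_n$, we have $X_s \in \overline{\co_X}$, and since each $F_i$ is $G$-invariant and continuous it is constant on $\overline{\co_X}$, so $X_s \in \chi^{-1}(a)$ as well. For uniqueness, by Lemma \ref{max torus} any semisimple element of $\chi^{-1}(a)$ is $G$-conjugate to some $h \in \liet = \Lie(T)$. If $h_1, h_2 \in \liet$ both lie in $\chi^{-1}(a)$, then since $\on{pr}|_\liet = \id_\liet$ we obtain $f_i(h_1) = F_i(h_1) = F_i(h_2) = f_i(h_2)$ for every $i$. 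Because the $f_i|_\liet$ generate $\bbf[\liet]^\cw$ (Chevalley's theorem for $G_0$, Lemma \ref{reductive che res them}), the elements $h_1, h_2$ are separated by no $\cw$-invariant; hence they lie in a single $\cw$-orbit and therefore in a single $G$-orbit.

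The only subtlety I anticipate is checking that the transition from ``$h_1, h_2$ agree on all $\cw$-invariants of $\liet$'' to ``$h_1, h_2$ are $\cw$-conjugate'' is valid in the characteristic assumed in Corollary \ref{2.5}; this is precisely the injectivity half of Chevalley's restriction theorem, which holds without further restriction, so no extra hypothesis beyond what is already imposed is needed.
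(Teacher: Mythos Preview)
Your argument is correct and essentially follows the paper's approach. For (1) and (2) the paper just writes ``obvious'', and what you wrote is exactly the unpacking of that via $\chi=\eta\circ\on{pr}$. For (3) the paper also conjugates two semisimple elements into $\liet$ and then, instead of passing through $\cw$-orbits as you do, simply invokes \cite[Proposition 7.13]{Jan} (the reductive fact that each fiber $\eta^{-1}(a)$ contains a unique semisimple $G_0$-orbit) to conclude $y_1,y_2$ are $G_0$-conjugate; your route via $\cw$-invariants separating $\cw$-orbits is just the inside of that proposition, so the two arguments coincide in substance.

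One small correction to your closing remark: the step ``$h_1,h_2$ agree on all $\cw$-invariants $\Rightarrow$ $h_1,h_2$ are $\cw$-conjugate'' is \emph{not} the injectivity half of the Chevalley restriction theorem; it is the general fact that for a finite group action on an affine variety the invariant ring separates orbits (the quotient $\liet\to\liet/\cw$ is geometric), which indeed holds in every characteristic. What does require the hypothesis of Corollary~\ref{2.5} is the earlier step that the $f_i|_\liet$ \emph{generate} $\bbf[\liet]^\cw$, i.e., the surjectivity in Lemma~\ref{reductive che res them}, which you already invoked correctly.
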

\begin{proof}
(1) and (2) are obvious.

(3) Fix a maximal torus $ \liet $ of $ \lieg_0$. Then $ \liet $ is a maximal torus of $ \lieg$.  Thanks to Lemma \ref{startpoint}, for all semisimple elements $ x_1, x_2\in \chi^{-1}(a), $ there exist $ g_i\in G,\ i=1,2, $ such that $ y_i:=g_i\cdot x_i\in \liet\subseteq \lieg_0,\ i=1,2. $ Note that $ \eta(y_i) =\chi(y_i) = a,\ i=1,2, $ and $ \lieg_0 $ is reductive. It follows from \cite[Proposition 7.13]{Jan} that $ y_1 $ and $ y_2 $ lie in the same $ G_0 $-orbit, namely, there is $ g_0\in G_0 $ such that $ g_0\cdot y_1=y_2. $ Moreover, $ x_2=g_2^{-1}\cdot g_0\cdot g_1\cdot x_1. $ As a result, $ x_1 $ and $ x_2 $ are in the same $ G $-orbit. Hence (3) holds.
\end{proof}

\subsection{Springer resolution for the semi-reductive case}
Let $ \she{B} $ (resp. $ \she{B}_0 $) be the set of all Borel subalgebras of $ \lieg $ (resp. $ \lieg_0 $). Lemma \ref{startpoint} implies that $ \lieb_0\mt \lieb_0\ds \uuu $ defines an isomorphisms between $ \she{B}_0 $ and $ \she{B}. $

Recall that for fixed positive root system $ \Delta^+, $ denote by $ B^+=B_0^+\ltimes U $ (resp. $ \lieb^+=\lieb_0^+\ds \uuu $) the corresponding Borel subgroup of $ G $ (resp. Borel subalgebra of $ \lieg $) where $ B_0^+ $ (resp. $ \lieb_0^+ $) is the corresponding Borel subgroup of $ G_0 $ (resp. Borel subalgebra of $ \lieg_0 $) (see \S\ref{1.2}). Set	$ \tilde{\she{N}}:=\{ (x,\mathfrak{b})\in \she{N}\times \she{B}\mid x\in \mathfrak{b} \}$.
Similar to the case of reductive Lie algebras, we have the following lemma.
\begin{lemma}
	Keep notations as above, we have
	\begin{enumerate}
		\item $ N_G(\lieb^+)=B^+. $
		\item $ \she{B}\iso G/B^+, $ and $ \she{B} $ is a projective variety.
		\item $ \tilde{\she{N}}\iso \{ (x,gB^+)\in \she{N}\times G/B^+\mid g^{-1} x\in \lieb^+ \}. $
	\end{enumerate}
\end{lemma}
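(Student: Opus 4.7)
The plan is to reduce each of the three statements to its classical counterpart for the reductive subgroup $G_0$, exploiting the semidirect-product structure $G = G_0 \ltimes U$ and the parallel decomposition $\lieb^+ = \lieb_0^+ \oplus \uuu$. All the ingredients needed—namely the Bruhat-type decomposition of Section 2.2, Lemma \ref{startpoint}, Proposition \ref{basic for fiber bundle}, and the classical fact $N_{G_0}(\lieb_0^+) = B_0^+$—are already in place in the excerpt.

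For (1), I would take any $g \in N_G(\lieb^+)$ and write $g = g_0 u$ with $g_0 \in G_0$ and $u \in U$. Since $U \subset B^+$ and $\Ad(U)$ already preserves $\lieb^+$, I may assume without loss of generality that $u = e$. As $U$ is normal in $G$, conjugation by $g_0$ preserves the splitting $\ggg = \ggg_0 \oplus \uuu$; projecting the equation $\Ad(g_0)\lieb^+ = \lieb^+$ onto the $\ggg_0$-summand yields $\Ad(g_0)\lieb_0^+ = \lieb_0^+$, so $g_0 \in N_{G_0}(\lieb_0^+) = B_0^+$ by the reductive case. Hence $g \in B_0^+ \cdot U = B^+$, and the reverse inclusion is immediate from the fact that $B^+$ stabilizes $\lieb^+$ under the adjoint action.

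For (2), I would first record that $G$ acts transitively on $\she{B}$: by Lemma \ref{startpoint}, every Borel subalgebra of $\ggg$ has the form $\lieb_0 \oplus \uuu$ for some Borel subalgebra $\lieb_0$ of $\ggg_0$, and all such $\lieb_0$ are $G_0$-conjugate. Combined with (1), this produces the set-theoretic bijection $G/B^+ \iso \she{B}$. To upgrade this to an isomorphism of varieties and obtain projectivity, I would invoke Proposition \ref{basic for fiber bundle}(2) to make $G/B^+$ a geometric quotient, and then note that the projection $G \to G_0$ induces an algebraic morphism $G/B^+ \to G_0/B_0^+$ which is bijective because $B^+ = B_0^+ \ltimes U$ and $G_0 \cap B^+ = B_0^+$. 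The target is the classical flag variety of $G_0$, hence projective, so $\she{B}$ inherits projectivity.

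For (3), the candidate map sends $(x, \lieb) \in \tilde{\she{N}}$ to $(x, gB^+)$ where $g \in G$ is chosen with $\Ad(g)\lieb^+ = \lieb$; well-definedness of the $gB^+$-component is precisely the content of (1), and the inverse $(x, gB^+) \mapsto (x, \Ad(g)\lieb^+)$ is evident. The constraint $g^{-1}x \in \lieb^+$ appearing on the right-hand side is equivalent to $\Ad(g^{-1})x \in \lieb^+$, i.e., $x \in \Ad(g)\lieb^+ = \lieb$, matching the incidence condition defining $\tilde{\she{N}}$. The main obstacle I anticipate is in (2): promoting the set-level bijection $G/B^+ \iso \she{B}$ to an isomorphism of varieties in the semi-reductive setting requires Proposition \ref{basic for fiber bundle}(2) together with a verification that the structural map to $G_0/B_0^+$ is a morphism of varieties (not just of sets) and an isomorphism; once this geometric quotient structure is in place, (1) and (3) are essentially formal.
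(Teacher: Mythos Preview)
Your proposal is correct and follows essentially the same approach as the paper: for (1) you both decompose $g=g_0u$ and reduce to the reductive fact $N_{G_0}(\lieb_0^+)=B_0^+$, and for (2)--(3) you both invoke Lemma~\ref{startpoint} together with (1). Your treatment is in fact more careful than the paper's, which dispatches (2) and (3) in a single line each; your attention to the variety structure in (2) via the bijection $G/B^+\to G_0/B_0^+$ is a welcome elaboration of what the paper leaves implicit.
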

\begin{proof}
(1)  For each $ x\in \lieb_0,\ y\in \uuu $, suppose $ g\in G_0, h\in U $ such that $ gh(x+y)\in \lieb. $ Note that $ h(x)=x+\pr{x}$, where $ \pr{x}\in \uuu. $ Therefore, $ gh(x+y)=g(x)+g(\pr{x}) +gh(y)$. Hence, $ g(x)\in \lieb_0 $ and $ g(\pr{x}) +gh(y)\in\uuu. $ It follows that $ g\in N_{G_0}(\lieb_0)=B_0^+ $ and $ N_G(\lieb^+)\leq B_0^+\ltimes U=B^+. $
Since $ \on{Lie}(B^+)=\lieb^+,\ B^+\leq N_G(\lieb^+). $ Consequently, $ N_G(\lieb^+)=B^+. $

(2) follows from (1) and Lemma \ref{startpoint}.

(3) follows from (2).
\end{proof}

Denote by $ \nnn $ the nilpotent radical of $ \lieb^+ $, then $ \nnn=\nnn_0 \ds \uuu$ where $ \nnn_0 $ is the nilpotent radical of $ \lieb^+_0. $ One can check that $ B^+ $ can act on $ G\times \nnn$, with $ x\cdot (g,n)=(x\cdot g^{-1},x\cdot n)$ for $x\in B^+$ and $g\in G$, $n\in \nnn$.   
The arguments of \cite[\S 6.5]{Jan} still work in our case. Therefore, the following proposition holds.

\begin{proposition}\label{smoothness of N-tilde}
	The projection $ \pi: \tilde{\she{N}}\ra \she{B} $ makes $ \tilde{\she{N}} $ a $ G $-equivariant vector bundle over $ \she{B} $ with fiber $ \nnn. $ The assignment $ (g,n)\mt (g\cdot n, gB^+) $ gives a $ G $-equivariant isomorphism
	$ G\times^{B^+}\nnn\iso \tilde{\she{N}}. $ In particular, 	$ \tilde{\she{N}} $ is smooth.
\end{proposition}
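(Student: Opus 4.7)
The plan is to adapt the classical Springer construction (cf.\ Jantzen \S6.5) to the semi-reductive setting, invoking three structural facts already established: $\she{B} \cong G/B^+$ carries local sections and admits the associated-bundle construction for any rational $B^+$-module (Proposition \ref{basic for fiber bundle}), the nilpotent cone decomposes as $\she{N} \cong \she{N}_0 \times \uuu$ (Proposition \ref{nil cone 3}), and $B^+ = B_0^+ \ltimes U$ with the matching decomposition $\lieb^+ = \lieb_0^+ \oplus \uuu$ on Lie algebras (Lemma \ref{startpoint}). First I would note that $\nnn = \nnn_0 \oplus \uuu$ is $B^+$-stable under the adjoint action, because $B^+$ normalises $\lieb^+ = \on{Lie}(B^+)$ and hence also its nilradical $\nnn$. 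Proposition \ref{basic for fiber bundle}(3) then furnishes the $G$-equivariant vector bundle $G \times^{B^+} \nnn$ over $\she{B}$ with fibre $\nnn$.

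Next I would define the candidate map $\phi : G \times^{B^+} \nnn \to \tilde{\she{N}}$ by $[g, n] \mapsto (\Ad(g)n,\, gB^+)$. Well-definedness on the $B^+$-quotient and $G$-equivariance are routine, and the image lies in $\tilde{\she{N}}$ because $\nnn \subset \she{N}$, the cone $\she{N}$ is $G$-stable (being the zero locus of $G$-invariants), and $\Ad(g)\nnn \subset \Ad(g)\lieb^+$ is the Borel subalgebra indexed by $gB^+$. The inverse should send $(x, gB^+) \mapsto [g, \Ad(g^{-1})x]$, which requires the key identity
\[
\lieb^+ \cap \she{N} = \nnn .
\]
Writing $y \in \lieb^+ \cap \she{N}$ as $y = y_0 + u$ with $y_0 \in \lieb_0^+$ and $u \in \uuu$, Proposition \ref{nil cone 3}(1) forces $y_0 \in \she{N}_0$, and the classical reductive fact $\lieb_0^+ \cap \she{N}_0 = \nnn_0$ then gives $y \in \nnn$.

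Granting this identity, the two assignments are mutually inverse morphisms of varieties, and $\tilde{\she{N}}$ is identified with the $G$-equivariant vector bundle $G \times^{B^+} \nnn$ over $\she{B}$. Smoothness then follows at once: by Lemma \ref{startpoint} the base $\she{B} \cong G_0/B_0^+$ is a smooth projective variety, and a vector bundle over a smooth base is smooth. The principal obstacle is exactly the identity $\lieb^+ \cap \she{N} = \nnn$, the one step where the semi-reductive structure is genuinely tested; the product decomposition of $\she{N}$ provided by Proposition \ref{nil cone 3} is precisely the input that reduces it to the classical reductive situation, after which every remaining verification is a formal transcription of Jantzen's argument.
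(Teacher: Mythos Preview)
Your proposal is correct and follows essentially the same route as the paper, which simply asserts that ``the arguments of \cite[\S6.5]{Jan} still work in our case'' and gives no further detail. Your write-up is in fact more informative: you isolate the one step where the semi-reductive situation genuinely differs from the reductive one, namely the identity $\lieb^+\cap\she{N}=\nnn$, and you reduce it cleanly to the classical fact $\lieb_0^+\cap\she{N}_0=\nnn_0$ via the product decomposition $\she{N}\cong\she{N}_0\times\uuu$ of Proposition~\ref{nil cone 3}.
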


\begin{lemma}\label{proper of mu}
	The projection $ \mu:\tilde{\she{N}}\ra \she{N} $ is a proper map.
\end{lemma}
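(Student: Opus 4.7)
The plan is to prove properness by realizing $\mu$ as the restriction of the second projection $\mathcal{N}\times\mathcal{B}\to\mathcal{N}$ to a closed subvariety, and then invoking the projectivity of $\mathcal{B}$ (already established in the preceding lemma, part (2)).

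First I would recall the description from part (3) of the preceding lemma,
\[
\tilde{\mathcal{N}}\iso\{(x,gB^+)\in\mathcal{N}\times G/B^+\mid g^{-1}x\in\mathfrak{b}^+\},
\]
and observe that under this identification $\mu$ is simply the first-factor projection $\mathrm{pr}_1\colon\mathcal{N}\times\mathcal{B}\to\mathcal{N}$ restricted to $\tilde{\mathcal{N}}$. Since $\mathcal{B}\cong G/B^+$ is projective and $\mathcal{N}\subset\lieg$ is separated, the projection $\mathrm{pr}_1\colon\mathcal{N}\times\mathcal{B}\to\mathcal{N}$ is a proper morphism by the standard fact that projection from a product with a complete factor is proper.

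Next I would verify that $\tilde{\mathcal{N}}$ is closed in $\mathcal{N}\times\mathcal{B}$. The condition $g^{-1}x\in\mathfrak{b}^+$ defines a closed subset of $G\times\mathcal{N}$ because $\mathfrak{b}^+$ is a linear subspace (hence closed) of $\lieg$ and the map $(g,x)\mapsto g^{-1}\cdot x$ is a morphism. This closed set is stable under the right action of $B^+$ on the first factor (since $(gh)^{-1}x=h^{-1}(g^{-1}x)$ and $\mathfrak{b}^+$ is $\Ad(B^+)$-stable), so its image in $\mathcal{N}\times G/B^+$ along the quotient map $G\to G/B^+$ is well-defined; using Proposition \ref{basic for fiber bundle}(2) (local sections of $\pi\colon G\to\mathcal{B}$) one sees this image is closed. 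Alternatively, one may invoke Proposition \ref{smoothness of N-tilde}: the identification $\tilde{\mathcal{N}}\cong G\times^{B^+}\nnn$ realizes $\tilde{\mathcal{N}}$ as a vector bundle over the projective variety $\mathcal{B}$, which is itself a complete variety, and the composition $\tilde{\mathcal{N}}\hookrightarrow \mathcal{N}\times\mathcal{B}\xrightarrow{\mathrm{pr}_1}\mathcal{N}$ agrees with $\mu$.

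Finally, a closed immersion followed by a proper morphism is proper, so $\mu$ is proper. I do not anticipate any serious obstacle; the only subtle point is the closedness of $\tilde{\mathcal{N}}$ inside $\mathcal{N}\times\mathcal{B}$, but this follows routinely either from local-section arguments (as granted by Proposition \ref{basic for fiber bundle}(2)) or from the vector bundle description in Proposition \ref{smoothness of N-tilde}, both of which are already available in the semi-reductive setting. The argument is therefore essentially identical to the reductive case treated in \cite[\S6.5]{Jan}, with the projectivity of $\mathcal{B}$ and the closedness of the incidence variety being the only inputs specifically needed from the semi-reductive structure.
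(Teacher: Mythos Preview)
Your proposal is correct and follows essentially the same approach as the paper: factor $\mu$ as the closed immersion $\tilde{\mathcal{N}}\hookrightarrow\mathcal{N}\times\mathcal{B}$ followed by the projection $\mathcal{N}\times\mathcal{B}\to\mathcal{N}$, and use that $\mathcal{B}$ is projective. The paper simply asserts the closed immersion without justification, whereas you supply the details; your main argument via local sections is fine, though in the ``alternatively'' clause note that the vector-bundle description alone does not immediately yield closedness in $\mathcal{N}\times\mathcal{B}$.
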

\begin{proof}
	By definition, $ \mu=\beta\circ \alpha, $ where $ \alpha:\tilde{\she{N}}\inj \she{N}\times \she{B} $ is a closed immersion and $ \beta: \she{N}\times \she{B} \ra \she{N} $ is projection. Since $ \she{B} $ is a projective variety, $ \mu $ is a projective morphism. Therefore $ \mu $ is proper.
\end{proof}

Let $ \she{B}_X:=\{ \lieb\in\she{B}\mid X\in\lieb \} $ for $ X\in\nc $ and $ \she{B}_{0,x}:=\{ \lieb_0\in\she{B}_0\mid x\in\lieb_0 \} $ for  $ x\in \nc_0. $

\begin{lemma}
	Suppose $ X=X_0+X_1\in\nilg{} $ where $ X_0\in\lieg_0, X_1\in\uuu. $ Then $ \psi: \lieb_0\mt \lieb_0 \ds \uuu $ defines a one-to-one correspondence between $ \she{B}_{0,X_0}$ and $\she{B}_X. $
\end{lemma}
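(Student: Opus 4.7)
The plan is to reduce the lemma immediately to the structural description of Borel subalgebras of $\lieg$ that was already established earlier in the section. Recall that just above the statement, it was observed (as a consequence of Lemma~\ref{startpoint}) that the assignment $\lieb_0 \mapsto \lieb_0 \oplus \uuu$ is a bijection between $\she{B}_0$ and $\she{B}$; in particular, every Borel subalgebra of $\lieg$ has the form $\lieb_0 \oplus \uuu$ for a unique Borel subalgebra $\lieb_0$ of $\lieg_0$. Thus $\psi$ is already bijective as a map from $\she{B}_0$ to $\she{B}$, and only the incidence compatibility needs to be checked, namely that $X_0 \in \lieb_0$ if and only if $X \in \psi(\lieb_0) = \lieb_0 \oplus \uuu$.

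One direction is immediate: if $X_0 \in \lieb_0$, then since $X_1 \in \uuu$ we have $X = X_0 + X_1 \in \lieb_0 \oplus \uuu$, so $\psi(\she{B}_{0,X_0}) \subseteq \she{B}_X$. For the reverse inclusion I would invoke the uniqueness of the direct sum decomposition $\lieg = \lieg_0 \oplus \uuu$. If $X \in \lieb_0 \oplus \uuu$, write $X = b + u$ with $b \in \lieb_0 \subseteq \lieg_0$ and $u \in \uuu$. Matching this against the decomposition $X = X_0 + X_1$ forces $b = X_0$ and $u = X_1$, and in particular $X_0 = b \in \lieb_0$, i.e.\ $\lieb_0 \in \she{B}_{0,X_0}$.

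Combining these two inclusions with the bijectivity of $\psi$ on all of $\she{B}_0$ yields the claimed one-to-one correspondence. I do not expect any real obstacle here; the lemma is a direct bookkeeping consequence of the structural fact that every Borel of $\lieg$ contains $\uuu$, together with the uniqueness of components in the direct sum $\lieg = \lieg_0 \oplus \uuu$. The only subtle point worth mentioning is that the fact ``every Borel subalgebra contains $\uuu$'' is precisely what allows the incidence condition to split cleanly along the $\lieg_0$/$\uuu$ components; without it, there could in principle be Borels of $\lieg$ meeting $\uuu$ in a proper subspace and the correspondence would be more delicate.
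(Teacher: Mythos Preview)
Your proof is correct and follows exactly the same approach as the paper, which simply notes that $X\in\lieb_0\oplus\uuu$ if and only if $X_0\in\lieb_0$ and declares the assertion immediate. You have merely unpacked this ``if and only if'' in detail, using the bijection $\she{B}_0\cong\she{B}$ established earlier and the uniqueness of components in the direct sum $\lieg=\lieg_0\oplus\uuu$.
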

\begin{proof}
	Note that  $ X\in\lieb_0\ds \uuu $ if and only if $ X_0\in \lieb_0. $ The assertion follows immediately.
\end{proof}

\begin{lemma}\label{birational equivalence}
	If $ n\in\she{N}_{\text{sm}}, $ then $ \mu^{-1}(n) $ contains exactly one element. Namely, there is unique Borel subalgebra of $ \lieg $ contains $ n. $
\end{lemma}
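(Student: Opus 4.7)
The plan is to reduce the statement for $\ggg$ to the known analogous statement for the reductive Lie algebra $\ggg_0$, by combining the product decomposition of the nilpotent cone from Proposition~\ref{nil cone 3} with the bijection between Borel subalgebras established in the preceding lemma.

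First, I would write $n = n_0 + n_1$ according to $\ggg = \ggg_0 \oplus \uuu$, so that $n_0 = \on{pr}(n) \in \nc_0$ and $n_1 \in \uuu$. By the preceding lemma, the assignment $\lieb_0 \mapsto \lieb_0 \oplus \uuu$ gives a bijection $\she{B}_{0,n_0} \leftrightarrow \she{B}_n$, so the problem reduces to showing that $|\she{B}_{0,n_0}| = 1$, i.e.\ that $n_0$ is contained in a unique Borel subalgebra of $\ggg_0$.

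Next I would invoke Proposition~\ref{nil cone 3}(3), which gives the isomorphism $\nc_{\text{sm}} \iso \nc_{0,\text{sm}} \times \uuu$ via $\on{pr}$; since $n \in \nc_{\text{sm}}$ by hypothesis, it follows that $n_0 \in \nc_{0,\text{sm}}$. For the reductive group $G_0$ (in the good-characteristic range in which Corollary~\ref{2.5} applies), the smooth locus of the nilpotent cone $\nc_0$ is precisely the open regular nilpotent $G_0$-orbit. It is a classical fact (see for instance \cite[\S6.4, \S6.5]{Jan}) that a regular nilpotent element of $\ggg_0$ lies in a unique Borel subalgebra of $\ggg_0$; equivalently, the Springer fibre $\she{B}_{0,n_0}$ over a regular nilpotent is a single point. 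Applying this to $n_0$ yields $|\she{B}_{0,n_0}| = 1$, and combining with the bijection above gives $|\she{B}_n| = 1$, i.e.\ $|\mu^{-1}(n)| = 1$.

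The main obstacle I anticipate is ensuring that the characteristic and positivity hypotheses already imposed on $G$ really do transport the classical reductive statement ``smooth points of $\nc_0$ are regular nilpotent, and regular nilpotents lie in unique Borels'' without loss. This is standard once Corollary~\ref{2.5} is in force (where $\nc_0$ is a complete intersection of the right dimension, so its smooth locus coincides with the regular locus), but it should be cited carefully so that the step is not just an analogy but an actual application of the reductive result to $\ggg_0$.
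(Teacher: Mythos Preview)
Your proposal is correct and follows essentially the same route as the paper: decompose $n=n_0+n_1$, use Proposition~\ref{nil cone 3}(3) to place $n_0$ in $\nc_{0,\text{sm}}$, invoke the reductive fact that such an $n_0$ lies in a unique Borel of $\lieg_0$, and then pass back to $\lieg$ via the bijection $\she{B}_{0,n_0}\leftrightarrow\she{B}_n$. The only cosmetic difference is that the paper re-derives that bijection inline rather than citing the preceding lemma, and it does not spell out the ``smooth $=$ regular'' identification for $\nc_0$ as explicitly as you do.
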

\begin{proof}
It follows from Proposition  \ref{nil cone 3} that $ n=n_0+n_1 $ for $ n_0\in \nilg{0}_{\text{sm}} $ and $ n_1\in \uuu. $ Since $ \lieg_0 $ is a reductive Lie algebra, there is exactly one Borel subalgerba $ \lieb_0\in \lieg_0 $ such that $ n_0\in \lieb_0. $
	
	Now suppose $ n\in \lieb $ for a Borel subalgerbra $\lieb$ of $ \lieg, $ and $ \lieb=\pr{\lieb}\ds\uuu $ where $ \pr{\lieb} $ is a Borel subalgebra of $ \lieg_0. $ Since $ n_0\in \pr{\lieb}, $ then $ \pr{\lieb}=\lieb_0 $ and $ \lieb=\lieb_0\ds \uuu. $
\end{proof}

Thanks to Proposition \ref{smoothness of N-tilde}, Lemmas \ref{proper of mu} and \ref{birational equivalence}, $ \tilde{\she{N}} $ is smooth and $ \mu $ is a proper birational morphism. As a consequence, the following result holds.

\begin{prop}\label{5.9}
	Keep notations as above, $ \mu:\tilde{\she{N}}\ra \she{N} $ is a resolution of singularities.
\end{prop}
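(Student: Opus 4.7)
The plan is to verify that $\mu$ satisfies the three defining conditions of a resolution of singularities: $\tilde{\she{N}}$ is smooth and irreducible, $\mu$ is proper, and $\mu$ is birational. All three can be assembled from the results established just before the proposition.

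Smoothness of $\tilde{\she{N}}$ is Proposition \ref{smoothness of N-tilde}, which exhibits $\tilde{\she{N}}\cong G\times^{B^+}\nnn$ as a $G$-equivariant vector bundle over $\she{B}$. Irreducibility follows since $\she{B}\cong G/B^+$ is irreducible (as $G$ is connected) and a vector bundle over an irreducible base is irreducible. Properness of $\mu$ is Lemma \ref{proper of mu}. So the substantial step is birationality.

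For birationality I would first note the dimension equality
\[
\dim\tilde{\she{N}}=\dim\she{B}+\dim\nnn=\dim(G_0/B_0^+)+\dim\nnn_0+\dim\uuu=\dim\she{N}_0+\dim\uuu=\dim\she{N},
\]
using the classical identity $\dim\she{N}_0=2\dim\nnn_0$ for the reductive $\lieg_0$ together with Proposition \ref{nil cone 3}(1). Lemma \ref{birational equivalence} then says that $\mu^{-1}(n)$ is a singleton for every $n$ in the nonempty dense open $\she{N}_{\text{sm}}$. Hence $\mu|_{\mu^{-1}(\she{N}_{\text{sm}})}$ is proper, quasi-finite, and set-theoretically bijective onto the smooth variety $\she{N}_{\text{sm}}$.

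The one step deserving real care is upgrading this set-theoretic bijection to a scheme-theoretic isomorphism, which in positive characteristic is not automatic. I would handle this via Zariski's Main Theorem, using the normality of $\she{N}$ from Proposition \ref{nil cone 3}(4): a proper quasi-finite morphism to a normal irreducible variety is finite, and such a map that is generically of degree one is an isomorphism onto its image. The degree-one check can be made either on function fields or by computing $d\mu$ at a point of $\mu^{-1}(\she{N}_{\text{sm}})$ — both source and target being smooth of equal dimension, surjectivity of $d\mu$ is equivalent to injectivity, and either is forced by the singleton-fiber property. This yields birationality of $\mu$ and, together with the smoothness of $\tilde{\she{N}}$ and the properness of $\mu$ noted above, completes the proof that $\mu$ is a resolution of singularities.
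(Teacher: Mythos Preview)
Your overall structure is exactly the paper's: the paper's entire proof is the sentence preceding the proposition, which cites Proposition~\ref{smoothness of N-tilde} for smoothness, Lemma~\ref{proper of mu} for properness, and Lemma~\ref{birational equivalence} for the singleton fibers over $\she{N}_{\text{sm}}$, and then asserts that $\mu$ is a proper birational morphism from a smooth variety. You assemble the same three ingredients and, commendably, try to justify the birationality step that the paper leaves implicit.

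However, your birationality argument has a genuine gap. You write that injectivity (equivalently surjectivity) of $d\mu$ at a point over $\she{N}_{\text{sm}}$ ``is forced by the singleton-fiber property.'' In positive characteristic this is false: the Frobenius $\bba^1\to\bba^1$, $x\mapsto x^p$, has singleton fibers over an algebraically closed field yet $d\mu=0$ everywhere. Singleton fibers over a dense open only give that the \emph{separable} degree of $k(\tilde{\she{N}})/k(\she{N})$ is $1$; they do not rule out a purely inseparable extension. Your Zariski Main Theorem route is fine \emph{once} degree $1$ is known, but neither of your two proposed checks (``on function fields'' is not carried out; the $d\mu$ check is incorrectly justified) actually establishes it.

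A clean repair is already present in the Remark immediately following the proposition: under the identifications $\tilde{\she{N}}\cong\tilde{\she{N}_0}\times\uuu$ and $\she{N}\cong\she{N}_0\times\uuu$ the map $\mu$ becomes $\mu_0\times\id_{\uuu}$, where $\mu_0:\tilde{\she{N}_0}\to\she{N}_0$ is the classical Springer resolution for the reductive part. Birationality of $\mu$ then reduces to birationality of $\mu_0$, which is standard (see \cite[\S6]{Jan}). Alternatively, one can argue via $G$-equivariance: over the regular nilpotent orbit both source and target are homogeneous $G$-spaces, and the bijective $G$-equivariant map between them is an isomorphism because, under the usual hypotheses on the characteristic, the centralizer of a regular nilpotent is smooth and contained in the unique Borel through it.
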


\begin{remark} Let $ \tilde{\nc_0}=\{ (x,\mathfrak{b})\in \she{N}_0\times \she{B}_0\mid x\in \mathfrak{b} \} $ be the Springer resolution of $ \nc_0. $
	Assume $ G $ satisfies the positivity condition \ref{positive} and the assumption of ch$\bbf$ as in Corollary \ref{nil cone 2},  then
	$$ \tilde{\nc}\iso \tilde{\nc_0}\times \uuu.$$
Actually, by Proposition \ref{nil cone 3}, $ \nc=\nc_0\times \uuu. $ Then $ (x,\lieb_0,y)\mt (x+y,\lieb_0+\uuu) $ defines an isomorphism between $ \tilde{\nc_0}\times \uuu $ and $ \tilde{\nc}$.
\end{remark}

\end{document}